\title[Mixed Hodge structures and Sullivan's minimal models of Sasakian manifolds]
{Mixed Hodge structures and Sullivan's minimal models of Sasakian manifolds}
\author{Hisashi Kasuya}
\theoremstyle{plain}
\theoremstyle{plain}
\theoremstyle{plain}
\theoremstyle{plain}
\newtheorem{theorem}{Theorem}[section] 
\theoremstyle{remark}
\newtheorem{remark}[theorem]{Remark}
\theoremstyle{Main result}
\newtheorem{main result}{Main result}
\theoremstyle{lemma}
\theoremstyle{definition}
\newtheorem{definition}[theorem]{Definition}
\theoremstyle{proposition}
\newtheorem{proposition}[theorem]{Proposition}
\theoremstyle{corollary}
\newtheorem{corollary}[theorem]{Corollary}
\theoremstyle{remark}
\newtheorem{example}[theorem]{Example}
\theoremstyle{remark}
\theoremstyle{remark}
\theoremstyle{assumption}
\address[Hisashi Kasuya]{Department of Mathematics, Tokyo Institute of Technology, 2-12-1 Ookayama, Meguro-ku, Tokyo 152-8551, JAPAN}
\email{kasuya@math.titech.ac.jp}
\keywords{Sasakian structure, Sullivan's minimal model, Morgan's mixed Hodge diagram, formality}
\subjclass[2010]{53C25, 55P62, 58A14}
\newcommand{\C}{\mathbb{C}}
\newcommand{\R}{\mathbb{R}}
\newcommand{\g}{\frak{g}}
\newcommand{\n}{\frak{n}}
\begin{document} 

\maketitle
\begin{abstract}
We show that the Malcev Lie  algebra of the fundamental group of a compact $2n+1$-dimensional Sasakian manifold with $n\ge 2$ admits a quadratic presentation by using Morgan's
bigradings of minimal models of mixed-Hodge diagrams.
By using bigradings of minimal models, we also simplify the proof of the result of Cappelletti-Montano, De Nicola, Marrero and Yudin on Sasakian nilmanifolds.
\end{abstract}
\section{Introduction}
Let $\Gamma$ be a group and $\Gamma=\Gamma_{1}\supset \Gamma_{2}\supset \Gamma_{3} \dots$
the lower central series (i.e. $\Gamma_{i}=[\Gamma_{i-1},\Gamma]$).
Consider the tower of nilpotent groups 
\[\dots\to \Gamma/\Gamma_{3}\to \Gamma/\Gamma_{2}\to \{e\}.
\]
Then it is possible to "tensor" these nilpotent groups with $\R$ or $\C$ (\cite{Mal}, \cite{DGMS}, \cite{ABC}) and we obtain the tower of real  nilpotent Lie algebras
 \[\dots\to {\frak n}_{3}\to {\frak n
}_{2}\to \{ 0\}.
\]
The inverse limit of this tower is called the Malcev Lie algebra of $\Gamma$. 

Let $M$ be a manifold.
By Sullivan's de Rham homotopy theory (\cite{Sul}),  the Malcev Lie algebra of the fundamental group $\pi_{1}(M)$ can be studied by the  differential forms on $M$.
The formality of compact K\"ahler manifolds (\cite{DGMS}) implies that if $M$ is a compact K\"ahler manifold, then the Malcev Lie algebra of the fundamental group $\pi_{1}(M)$ admits a quadratic presentation (i.e. is a quotient of a free Lie algebra by an ideal generated in degree two).
This fact is very useful to study whether a given finitely generated group can be the fundamental group of a compact K\"ahler manifold.

In this paper, we consider Sasakian manifolds.
Sasakian manifolds constitute an odd-dimensional counterpart of the class of K\"ahler manifolds.
We are interested in whether  the Malcev Lie algebras of the fundamental groups of compact Sasakian manifolds admit quadratic presentations.

First, we see important examples.
We consider the $2n+1$-dimensional real  Heisenberg group $H_{2n+1}$ which is the group of matrices of the form
\[\left(
\begin{array}{ccc}
1& x&z  \\
0&    I &\,^{t} y\\
0&0&1  
\end{array}
\right)
\]
where $I$ is the $n\times n$ unit matrix, $x,y \in \R^{n}$ and $z\in \R$. 
For any  lattice $\Gamma$ in $H_{2n+1}$, the nilmanifold $\Gamma \backslash H_{2n+1}$ admits a Sasakian structure and $\pi_{1}(\Gamma \backslash H_{2n+1})\cong \Gamma$.
It is known that for any $n\ge 2$,  the Malcev Lie algebra of $\Gamma$  admits a quadratic presentation (see \cite{CT}). 
However, if $n=1$, then the Malcev Lie algebra of $\Gamma$ can not  admit a quadratic presentation. 

In this paper, we extend this observation.
We prove the following theorem.

\begin{theorem}\label{int}
Let $M$ be a compact $2n+1$-dimensional Sasakian manifold with $n\ge 2$.
Then the Malcev Lie algebra of $\pi_{1}(M)$ admits a quadratic presentation.
\end{theorem}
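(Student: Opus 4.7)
The plan is to exploit the transverse K\"ahler structure of the Sasakian manifold $M$ to produce a mixed Hodge diagram (MHD) on its de Rham algebra in the sense of Morgan, and then use the resulting bigraded $1$-minimal model to force the quadratic presentation. By Sullivan's theory, the Malcev Lie algebra of $\pi_{1}(M)$ is dual to the $1$-minimal model $\mathcal{M}_{1}$ of the rational de Rham algebra of $M$; it is quadratically presented precisely when $\mathcal{M}_{1}$ is generated in weight $1$ with relations in weight $2$, so the task reduces to controlling the weight filtration on $\mathcal{M}_{1}$.

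First, I would construct a genuine MHD on $A^{*}(M)$ from Sasakian data. The natural starting point is the characteristic (Reeb) foliation $\mathcal{F}_{\xi}$ and the subalgebra of basic forms $\Omega^{*}_{B}(M)\subset A^{*}(M)$. The basic forms carry a transverse K\"ahler structure and hence a pure Hodge structure, together with hard Lefschetz with respect to the class $[d\eta]$. The full de Rham algebra is obtained from $\Omega^{*}_{B}(M)$ by adjoining the contact $1$-form $\eta$, and this extension should yield a two-step weight filtration (the basic part in the smallest weight, the $\eta$-part one weight higher) together with a Hodge filtration coming from the transverse complex structure. I would verify that this assembly satisfies Morgan's axioms for a mixed Hodge diagram over $\R$.

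Second, I would apply Morgan's structure theorem: the $1$-minimal model $\mathcal{M}_{1}$ inherits a compatible bigrading $\mathcal{M}_{1}=\bigoplus \mathcal{M}_{1}^{p,q}$ with differential of bidegree $(0,0)$, and the weight-$w$ piece in degree $k$ maps onto the weight-$w$ piece of $H^{k}(M)$. The hypothesis $n\geq 2$ enters at this point through the Gysin-type long exact sequence for the Reeb foliation: combined with hard Lefschetz on basic cohomology, it yields $H^{1}(M,\R)\cong H^{1}_{B}(M)$, so $H^{1}(M)$ is pure of weight $1$. A parallel analysis, again using hard Lefschetz on basic cohomology in the presence of the extra room provided by $n\geq 2$, controls the relevant weight piece of $H^{2}(M)$. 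This is in sharp contrast with $n=1$, where $\eta$ itself represents a nonzero cohomology class of weight $2$ in $H^{1}(M)$, introducing a weight-$2$ generator of $\mathcal{M}_{1}^{1}$ and breaking the argument; this is exactly the phenomenon that prevents the $3$-dimensional Heisenberg nilmanifold from being quadratic.

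Third, once the bigrading concentrates degree-$1$ generators of $\mathcal{M}_{1}$ in weight $1$, the quadratic presentation follows by a standard Morgan--Deligne argument. The ideal of relations of $\mathcal{M}_{1}$ is generated, after dualization, by the cokernel of the cup product $H^{1}\otimes H^{1}\to H^{2}$ and by any higher-weight contributions in $H^{2}$; purity of weight $1$ on $H^{1}$ together with the weight control on $H^{2}$ forces every such relation to sit in weight $2$, hence in wedge-degree $2$. Dualizing via the Chevalley--Eilenberg correspondence exhibits the Malcev Lie algebra as a quotient of the free Lie algebra on $H^{1}(M,\R)^{*}$ by an ideal generated in bracket-degree $2$. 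The main obstacle is really the first step: building an honest MHD on $A^{*}(M)$ whose weight filtration on $H^{1}$ becomes pure precisely in the regime $n\geq 2$. Once such a MHD is in place, Morgan's bigrading machinery and the Sasakian hard-Lefschetz statement reduce the theorem to a formal computation in $\mathcal{M}_{1}$.
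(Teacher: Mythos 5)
Your overall strategy --- a Morgan mixed Hodge diagram built from the transverse K\"ahler structure, the induced bigrading on the $1$-minimal model, and purity/weight control on $H^{1}$ and $H^{2}$ --- is exactly the route the paper takes. Two points need correction or completion, though.

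First, you have misplaced where the hypothesis $n\ge 2$ enters. For \emph{every} compact Sasakian manifold, including $n=1$, one has $H^{1}(M,\R)\cong H^{1}_{B}(M,\R)$ and $H^{1}$ is pure of weight $1$: the contact condition forces $d\eta\neq 0$, so $\eta$ is never closed and cannot represent a class in $H^{1}(M)$. Your claim that for $n=1$ ``$\eta$ itself represents a nonzero cohomology class of weight $2$ in $H^{1}(M)$'' is false --- for the $3$-dimensional Heisenberg nilmanifold $H^{1}$ is still $2$-dimensional and pure. The hypothesis $n\ge 2$ is used only in $H^{2}$: one needs the injectivity of $[d\eta]_{B}\wedge\colon H^{1}_{B}(M,\C)\to H^{3}_{B}(M,\C)$ (a transverse hard Lefschetz statement, see \cite{BoG}), which forces the kernel of $d$ in degree $2$ of the model to lie in $H^{2}_{B}(M,\C)$, i.e.\ $H^{2}(M,\C)=V_{2,0}\oplus V_{1,1}\oplus V_{0,2}$ with no weight-$3$ part; this is what makes $H^{2}(\mathcal M^{\ast}(1))\to H^{2}(\mathcal M^{\ast})$ surjective. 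For $n=1$ that map is zero for degree reasons and classes $[\alpha\wedge\eta]$ of types $(2,1)$ and $(1,2)$ survive in $H^{2}$ --- that is the actual obstruction, not anything in $H^{1}$.

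Second, you correctly identify the construction of the mixed Hodge diagram as the main obstacle but leave it unresolved. The paper does not put the diagram on $A^{\ast}(M)$ itself: it invokes Tievsky's theorem that $A^{\ast}(M)$ is quasi-isomorphic to the finite-dimensional DGA $A^{\ast}=H^{\ast}_{B}(M,\R)\otimes\bigwedge\langle y\rangle$ with $dy=[d\eta]_{B}$, and defines the two-step weight filtration ($W_{0}=H^{\ast}_{B}$, $W_{1}=A^{\ast}$) and the Hodge filtration (with $y$ of type $(1,1)$) on this small model, where Morgan's axioms reduce to the Hodge decomposition of basic cohomology supplied by the $\partial_{B}\bar\partial_{B}$-lemma. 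Since minimal models are invariant under quasi-isomorphism, this suffices. Without that (or an equivalent) reduction, verifying Morgan's axioms for your filtration on the full de Rham complex is genuinely more delicate, and your proof would still owe that verification.
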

\begin{remark}
It seems that this fact is known by some experts.
However, it is difficult to find an explicit proof in some reference.
\end{remark}

We notice that Sasakian manifolds are not formal in general, unlike K\"ahler manifolds.
See \cite{Bis}, \cite{Mu} for the formality of sasakian manifolds.
But we can apply some Hodge theoretical  properties of Sasakian manifolds like algebraic varieties.
By using Morgan's techniques  of mixed Hodge diagrams \cite{Mor},
we will show the following theorem.

\begin{theorem}\label{SasMMM11int}
Let $M$ be a $2n+1$-dimensional compact Sasakian manifold and $A^{\ast}_{\C}(M)$ the de Rham complex of $M$.
Consider the minimal model $\mathcal M$ (resp $1$-minimal model) of $A^{\ast}_{\C}(M)$ with a quasi-isomorphism  (resp. $1$-quasi-isomorphism) $\phi:\mathcal M\to A_{\C}^{\ast}(M)$.
Then we have:
\begin{enumerate}
\item The real de Rham cohomology $H^{\ast}(M,\R)$ admits a 
$\R$-mixed-Hodge structure.

\item 
$\mathcal M^{\ast}$ admits a bigrading 
\[\mathcal M^{\ast}=\bigoplus_{p,q\ge0}\mathcal M^{\ast}_{p,q}
\]
such that $\mathcal M^{\ast}_{0,0}=\mathcal M^{0}=\C$ and 
the product and the differential are of type $(0,0)$.

\item
Consider the bigrading $H^{\ast}(M,\C)=\bigoplus V_{p,q}$ for the  $\R$-mixed-Hodge structure.
Then the induced map $\phi^{\ast}:H^{\ast}(\mathcal M^{\ast})\to H^{\ast}(M,\C)$ 
sends $H^{\ast}(\mathcal M^{\ast}_{p,q})$ to $V_{p,q}$.

\end{enumerate}
\end{theorem}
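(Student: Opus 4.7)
The strategy is to construct a mixed Hodge diagram in the sense of Morgan from the Sasakian data on $M$ and then invoke Morgan's theorem on filtered minimal models, which yields all three statements simultaneously.

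The first step is to exploit the transverse K\"ahler structure. Let $\eta$ be the contact form with Reeb vector field $\xi$, and let $\Omega^{\ast}_{B}(M)$ denote the basic forms. Every differential form on $M$ decomposes uniquely as $\alpha+\eta\wedge\beta$ with $\alpha,\beta$ basic, and $d\eta$ is the pullback of the transverse K\"ahler form. I would define a weight filtration $W_{\bullet}$ on $A^{\ast}_{\C}(M)$ by placing basic forms in the lower weight piece and the $\eta$-summand in the next, with the appropriate Tate-type shift; a Hodge filtration $F^{\bullet}$ is built from the transverse $(p,q)$-decomposition of basic forms, extended to the $\eta$-summand with a shift chosen so that $d\eta$ has type $(1,1)$. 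A real structure is provided by the analogous splitting over $\R$, and the comparison maps between the real and complex pieces are essentially identities.

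The key verification is that the resulting data satisfy Morgan's mixed Hodge diagram axioms, and in particular that each graded piece $\mathrm{Gr}^{W}_{k}$ carries a pure Hodge structure of the correct weight on cohomology. This reduces to the transverse Hodge theorem for Sasakian manifolds: basic cohomology $H^{\ast}_{B}(M)$ carries a pure Hodge decomposition of K\"ahler type, and the Gysin-type long exact sequence
\[
\cdots\to H^{k-2}_{B}(M)\xrightarrow{L_{d\eta}}H^{k}_{B}(M)\to H^{k}(M)\to H^{k-1}_{B}(M)\xrightarrow{L_{d\eta}}H^{k+1}_{B}(M)\to\cdots
\]
exhibits $H^{\ast}(M)$ as an extension of pure Hodge pieces of the prescribed weights once the $\eta$-summand is Tate-twisted.

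Given the mixed Hodge diagram, (1) is immediate. For (2), Morgan's theorem canonically produces a bigrading $\mathcal M^{\ast}=\bigoplus\mathcal M^{\ast}_{p,q}$ on the minimal (or $1$-minimal) model with product and differential of bidegree $(0,0)$; the identity $\mathcal M^{0}_{0,0}=\mathcal M^{0}=\C$ is automatic from minimality. Statement (3) is the compatibility clause built into Morgan's construction: the filtered quasi-isomorphism $\phi$ preserves bigradings on cohomology. The principal obstacle is the precise choice of $W$ and $F$ on the $\eta$-summand, with the correct Tate-type shifts, so that the purity axiom on $\mathrm{Gr}^{W}$ actually holds; the computation hinges on the interplay between the Lefschetz operator $L_{d\eta}$ on basic cohomology and the transverse $(p,q)$-type, which is where the Sasakian (as opposed to merely K-contact) hypothesis enters.
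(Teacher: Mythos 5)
Your overall strategy is the paper's: put the basic part in weight $0$, the contact direction in weight $1$ with a Tate-type twist making it of type $(1,1)$, check Morgan's mixed-Hodge-diagram axioms via transverse Hodge theory, and then quote Morgan's theorem on bigraded minimal models. But there is a genuine gap at the very first step. You claim that every form on $M$ decomposes uniquely as $\alpha+\eta\wedge\beta$ with $\alpha,\beta$ \emph{basic}, and you propose to define $W_{\bullet}$ and $F^{\bullet}$ directly on $A^{\ast}_{\C}(M)$. The unique decomposition with $\alpha,\beta$ \emph{horizontal} (i.e.\ $\iota_{\xi}\alpha=\iota_{\xi}\beta=0$) is true, but basic requires in addition $\iota_{\xi}d\alpha=0$, i.e.\ $L_{\xi}\alpha=0$, which fails for general forms. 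As a consequence your filtration is not a filtration of the de Rham complex at all: the span of horizontal forms is not $d$-stable (since $\iota_{\xi}d\alpha=L_{\xi}\alpha$ need not vanish), and the span of $\mathrm{(basic)}\oplus\eta\wedge\mathrm{(basic)}$ is a sub-DGA but is not all of $A^{\ast}(M)$, so $W_{1}\neq A^{\ast}(M)$. Either way the mixed Hodge diagram cannot be built on $A^{\ast}_{\C}(M)$ itself.

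The missing ingredient is exactly the step the paper inserts before anything else: Tievsky's theorem that $A^{\ast}(M)$ is quasi-isomorphic to the small DGA $A^{\ast}=H^{\ast}_{B}(M,\R)\otimes\bigwedge\langle y\rangle$ with $dy=[d\eta]_{B}$ (Theorem \ref{for}, whose proof uses the transverse $\partial_{B}\bar\partial_{B}$-lemma, Proposition \ref{del}). One defines $W$ and $F$ on \emph{this} model --- $W_{0}=H^{\ast}_{B}(M,\R)$, $W_{1}=A^{\ast}$, and $F$ from the bigrading $A^{\ast}_{p,q}=H^{p,q}_{B}(M)\oplus H^{p-1,q-1}_{B}(M)\wedge y$ using the basic Bott--Chern decomposition --- where the mixed-Hodge-diagram axioms become an easy computation of $_{W}E_{1}$ (Theorem \ref{SASAH}). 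Since the minimal and $1$-minimal models depend only on the quasi-isomorphism class, Morgan's Theorem \ref{MMM11} applied to this model gives the bigrading on $\mathcal M^{\ast}$ and the compatibility of $\phi^{\ast}$ with the bigraded cohomology, which is the content of the theorem. Your Gysin-sequence argument for purity is also not what Morgan's axioms ask for (they require purity of the $F$-filtration on the $E_{1}$-page of the weight spectral sequence of the DGA, from which purity on $\mathrm{Gr}^{W}H^{\ast}$ follows), though that point is repairable; the unrepaired defect is the attempt to filter the full de Rham complex.
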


By using this theorem, we prove Theorem \ref{int}.
Moreover, we will give another application of Theorem \ref{SasMMM11int}.
In  \cite{Nil}, it is proved that 
a compact $2n+1$-dimensional nilmanifold admits a Sasakian structure if and only if it is a Heisenberg nilmanifold $H_{2n+1}/\Gamma$.
By using Theorem \ref{SasMMM11int}, we can simplify the proof of this result (Section \ref{nini}).

 {\bf  Acknowledgements.} 

The author  would  like to thank   Sorin Dragomir for  helpful comments.
This research is supported by  JSPS Research Fellowships for Young Scientists.

\section{Mixed Hodge structures}

\begin{definition}
An {\em$\R$-Hodge structure} of weight $n$ on a $\R$-vector space $V$ is a finite bigrading 
\[V_{\C}=\bigoplus_{p+q=n}V_{p,q}
\]
on the complexification $V_{\C}=V\otimes \C$
such that
\[\overline{V_{p,q}}=V_{q,p},
\]
or equivalently, a finite decreasing filtration $F^{\ast}$ on $V_{\C}$ 
such that 
\[F^{p}(V_{\C})\oplus \overline {F^{n+1-p}(V_{\C})}
\]
for each $p$. 

\end{definition}

\begin{definition}
An {\em$\R$-mixed-Hodge structure} on an $\R$-vector space $V$ is a pair $(W_{\ast},F^{\ast})$
such that:
\begin{enumerate}
\item $W_{\ast}$ is an increasing filtration which is bounded below,
\item $F^{\ast}$ is a decreasing filtration  on $V_{\C}$ called such that
the filtration on $Gr_{n}^{W} V_{\C}$ induced by $F^{\ast}$ is an $\R$-Hodge structure of weight $n$.
\end{enumerate}
We call $W_{\ast}$ the weight filtration and $F^{\ast}$ the Hodge filtration.
\end{definition}
\begin{proposition}\label{BIGG}{\rm (\cite[Proposition 1.9]{Mor})}
Let $(W_{\ast},F^{\ast})$ be an $\R$-mixed-Hodge structure on an $\R$-vector space $V$.
Define $V_{p,q}=R_{p,q}\cap L_{p,q}$ where
$R_{p,q}=W_{p+q}(V_{\C})\cap F^{p}(V_{\C})$ and
$L_{p,q}=W_{p+q}(V_{\C})\cap \overline{F^{q}(V_{\C})}+\sum_{i\ge 2} W_{p+q-i}(V_{\C})\cap \overline{F^{q-i+1}(V_{\C})}$.
Then we have 
  the   bigrading $V_{\C}=\bigoplus V_{p,q}$ such that:
$W_{i}(V_{\C})=\bigoplus_{p+q\le i}V_{p,q}$ and
$F^{i}(V_{\C})=\bigoplus_{p\ge i} V_{p,q}$.

\end{proposition}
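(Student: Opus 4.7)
The plan is to follow Deligne's construction of the canonical bigrading of a mixed Hodge structure, which is what Morgan formalizes here. The starting observation is that for each integer $n$, the filtration induced by $F^{\ast}$ on $Gr^{W}_{n}(V_{\C})$ makes it a pure $\R$-Hodge structure of weight $n$, and hence gives a canonical Hodge decomposition $Gr^{W}_{n}(V_{\C})=\bigoplus_{p+q=n}(Gr^{W}_{n}(V_{\C}))^{p,q}$. The whole purpose of the formulas for $R_{p,q}$ and $L_{p,q}$ is to lift these pure decompositions compatibly to $V_{\C}$ itself. The $F$-side lift is forced: any lift of $(Gr^{W}_{p+q})^{p,q}$ must lie in $R_{p,q}=W_{p+q}(V_{\C})\cap F^{p}(V_{\C})$. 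On the conjugate side, the naive choice $W_{p+q}(V_{\C})\cap\overline{F^{q}(V_{\C})}$ would suffice in the pure case but interferes across weights in general; the correction terms $W_{p+q-i}(V_{\C})\cap\overline{F^{q-i+1}(V_{\C})}$ for $i\ge 2$ compensate for this by absorbing lower-weight errors without disturbing the class modulo $W_{p+q-1}(V_{\C})$.

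I would proceed by induction on the length of the weight filtration. The inductive claim is that the natural projection $V_{p,q}\to(Gr^{W}_{p+q}(V_{\C}))^{p,q}$ is an isomorphism for every $(p,q)$. Given a class in $(Gr^{W}_{p+q})^{p,q}$ represented by some $\tilde v\in W_{p+q}(V_{\C})\cap F^{p}(V_{\C})\cap(\overline{F^{q}(V_{\C})}+W_{p+q-1}(V_{\C}))$, which is available from the pure Hodge decomposition on $Gr^{W}_{p+q}$, the task is to modify $\tilde v$ by elements of $W_{p+q-1}(V_{\C})$ to land inside $L_{p,q}$ while remaining inside $R_{p,q}$. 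The inductive hypothesis produces a bigrading of $W_{p+q-1}(V_{\C})$, and the specific shifts in the correction sum are calibrated so that each successive adjustment decreases the weight of the remaining error while preserving $F^{p}$-membership.

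Once this bijection is established, surjectivity of $\bigoplus V_{p,q}\to V_{\C}$ follows by peeling off Hodge components of the top weight piece and applying induction, while directness follows because any element of $V_{p,q}$ projects to the $(p,q)$-component in $Gr^{W}_{p+q}$ and to zero in every higher $Gr^{W}_{m}$: a relation among elements from different $V_{p,q}$'s would project to a relation among independent pieces of pure Hodge structures, which is impossible. The filtration identities $W_{i}(V_{\C})=\bigoplus_{p+q\le i}V_{p,q}$ and $F^{i}(V_{\C})=\bigoplus_{p\ge i}V_{p,q}$ then follow: the forward inclusions are built into the definitions of $R_{p,q}$ and $L_{p,q}$, and the reverse inclusions reduce to a dimension count on each graded piece using the corresponding identities for the pure Hodge structures on $Gr^{W}_{n}(V_{\C})$.

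The main obstacle is the inductive step, and specifically the verification that the correction sum has precisely the shape written. The weight drop by $i$ coupled with the conjugate Hodge-degree drop by $i-1$ is the exact balance that allows a correction term to live in a strictly lower weight piece (so that it is controlled by the inductive hypothesis) while still respecting the $(p,q)$-position modulo weight; it is this balance that simultaneously guarantees existence of a lift and triviality of the kernel of $V_{p,q}\to(Gr^{W}_{p+q})^{p,q}$. Without this careful tuning the candidate spaces $V_{p,q}$ would either fail to span $V_{\C}$ or fail to sum directly, so confirming this shift is really the technical heart of the argument.
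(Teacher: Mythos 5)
The paper itself gives no proof of this proposition: it is quoted verbatim from Morgan (his Proposition 1.9, which is Deligne's canonical splitting of a mixed Hodge structure), so there is no in-paper argument to compare against. Your strategy is the standard one — induct on the weight filtration, show that the projection $V_{p,q}\to \bigl(Gr^{W}_{p+q}(V_{\C})\bigr)^{p,q}$ is an isomorphism, and deduce the two filtration identities — and that strategy does work.

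However, as written your proposal is an outline rather than a proof, and the omission sits exactly at the decisive point. You correctly identify that everything hinges on the inductive step: given a class in $\bigl(Gr^{W}_{p+q}\bigr)^{p,q}$, one must produce a representative lying in $R_{p,q}\cap L_{p,q}$, and one must show $V_{p,q}\cap W_{p+q-1}(V_{\C})=0$ so that the projection is injective. You assert that the shifts in the correction sum $\sum_{i\ge 2}W_{p+q-i}(V_{\C})\cap\overline{F^{q-i+1}(V_{\C})}$ are ``calibrated'' to make both of these come out, and you even flag this as ``the technical heart of the argument'' — but you never carry out the computation. Concretely, what is missing is: starting from $\tilde v\in W_{p+q}\cap F^{p}$ with $\tilde v\in\overline{F^{q}}+W_{p+q-1}$, write the error term in $W_{p+q-1}$ in terms of the inductively constructed bigrading of $W_{p+q-1}$, show that the components violating membership in $L_{p,q}$ can be subtracted off while staying in $F^{p}$ (this uses that those components lie in $F^{p}\cap W_{p+q-1}$, which must itself be argued from the pure Hodge structures on the lower graded pieces), and check that the residual terms land precisely in the summands $W_{p+q-i}\cap\overline{F^{q-i+1}}$. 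The injectivity claim similarly needs an explicit argument that an element of $R_{p,q}\cap L_{p,q}\cap W_{p+q-1}$ vanishes, again by descending through the weight filtration. Until these verifications are written out, the proposal establishes neither that the $V_{p,q}$ span $V_{\C}$ nor that their sum is direct, so the bigrading and the identities $W_{i}(V_{\C})=\bigoplus_{p+q\le i}V_{p,q}$ and $F^{i}(V_{\C})=\bigoplus_{p\ge i}V_{p,q}$ remain unproved.
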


\begin{proposition}{\rm (\cite[Proposition 1.11]{Mor})}
Let $V$ be an $\R$-vector space.
We suppose that we have a bigrading $V_{\C}=\bigoplus V^{p,q}$ 
such that 
  $\overline{V_{p,q}}=V_{q,p}$ modulo $\bigoplus_{r+s<p+q} V_{r,s}$ and the grading ${\rm Tot}^{\ast} V_{\ast,\ast}$ is bounded below where ${\rm Tot}^{r} V_{\ast,\ast}=\bigoplus_{p+q=r}V_{p,q}$.
Then the filtrations $W$ and $F$ such that
$ W_{i}(V_{\C})=\bigoplus_{p+q\le i}V_{p,q}$ and 
$F^{i}(V_{\C})=\bigoplus_{p\ge i} V_{p,q}$
give an $\R$-mixed Hodge structure on $V$.

\end{proposition}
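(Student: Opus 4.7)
The plan is to take $W$ and $F$ as defined and verify both items in the definition of an $\R$-mixed Hodge structure. The unique subtle point is that the hypothesized Hodge symmetry $\overline{V_{p,q}}=V_{q,p}$ only holds modulo $\bigoplus_{r+s<p+q}V_{r,s}$, so the symmetry will be exact only after passing to the associated graded for $W$.

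First I would check that $W_{\ast}$ descends from $V_{\C}$ to an increasing filtration defined over $\R$, i.e., that each $W_{i}(V_{\C})=\bigoplus_{p+q\le i}V_{p,q}$ is stable under complex conjugation. Take $v\in V_{p,q}$ with $p+q\le i$; by hypothesis $\bar v$ lies in $V_{q,p}+\bigoplus_{r+s<p+q}V_{r,s}$, and every summand here sits in some $V_{r',s'}$ with $r'+s'\le p+q\le i$, hence in $W_{i}(V_{\C})$. Thus $W_{i}(V_{\C})=(W_{i}(V_{\C})\cap V)\otimes\C$, and we can regard $W_{\ast}$ as a filtration on $V$. Its boundedness below is immediate from the boundedness below of ${\rm Tot}^{\ast}V_{\ast,\ast}$: if $V_{p,q}=0$ for $p+q<N$ then $W_{i}(V_{\C})=0$ for $i<N$.

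Next I would analyze the induced filtration on $Gr^{W}_{n}V_{\C}$. Because $W_{i}(V_{\C})=\bigoplus_{p+q\le i}V_{p,q}$, the projection $W_{n}(V_{\C})\to Gr^{W}_{n}V_{\C}$ restricts to an isomorphism on $\bigoplus_{p+q=n}V_{p,q}$; write $\bar V_{p,q}$ for the image of $V_{p,q}$. A direct computation with $F^{i}(V_{\C})=\bigoplus_{p\ge i}V_{p,q}$ shows
\[
F^{p}(V_{\C})\cap W_{n}(V_{\C})=\bigoplus_{\substack{p'\ge p\\ p'+q'\le n}}V_{p',q'},
\]
so the induced filtration on $Gr^{W}_{n}V_{\C}$ is $F^{p}Gr^{W}_{n}V_{\C}=\bigoplus_{p'\ge p,\,p'+q'=n}\bar V_{p',q'}$. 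It therefore suffices to check that the bigrading $Gr^{W}_{n}V_{\C}=\bigoplus_{p+q=n}\bar V_{p,q}$ satisfies Hodge symmetry $\overline{\bar V_{p,q}}=\bar V_{q,p}$.

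The key step is exactly this symmetry check, and it is where the correction terms in the hypothesis disappear. For $v\in V_{p,q}$ with $p+q=n$, the hypothesis gives $\bar v\equiv w\pmod{\bigoplus_{r+s<n}V_{r,s}}$ for some $w\in V_{q,p}$; the error term lies in $W_{n-1}(V_{\C})$, so under the projection $W_{n}(V_{\C})\to Gr^{W}_{n}V_{\C}$ it vanishes, yielding $\overline{\bar v}=\bar w\in \bar V_{q,p}$. This completes the verification that each $Gr^{W}_{n}V_{\C}$ carries an $\R$-Hodge structure of weight $n$, hence that $(W_{\ast},F^{\ast})$ is an $\R$-mixed Hodge structure on $V$. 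The main (if modest) obstacle is simply keeping straight the bookkeeping between the exact filtration $W$, the exact filtration $F$, and the ``approximate'' conjugation symmetry, and noticing that passage to $Gr^{W}$ converts the approximate symmetry into an exact one.
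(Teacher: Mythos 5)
Your verification is correct, and the key observation --- that the error terms in the approximate Hodge symmetry lie in $W_{p+q-1}$ and hence die upon passage to $Gr^{W}$ --- is exactly the point of the statement. The paper itself gives no proof (it simply cites Morgan's Proposition 1.11), and your argument is the standard one found there, so there is nothing to compare beyond noting that your write-up fills in the routine checks (conjugation-stability of $W$, boundedness below, and the computation of the induced filtration on $Gr^{W}_{n}$) that the paper leaves to the reference.
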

\section{Sullivan's minimal models}

Let $k$ be a field of characteristic $0$.
We recall Sullivan's minimal model of a  differential graded algebra (shortly DGA).


\begin{definition}
Let $A^{\ast}$ and $B^{\ast}$ be $k$-DGAs.
\begin{itemize}
\item A morphism $\phi:A^{\ast}\to B^{\ast}$ is a quasi-isomorphism (resp. $1$-quasi-isomorphism) if $\phi$ induces a cohomology isomorphism (resp.  isomorphisms on the $0$-th and first cohomologies and an injection on the second cohomology).

\item
$A^{\ast}$ and $B^{\ast}$ are {\em quasi-isomorphic} (resp. $1$-{\em quasi-isomorphic}) if there is a finite diagram of $k$-DGAs 
\[A\leftarrow C_{1}\rightarrow C_{2}\leftarrow\cdot \cdot \cdot \leftarrow C_{n}\rightarrow B
\]
such that all the morphisms are quasi-isomorphisms (resp. $1$-quasi-isomorphisms).

\end{itemize}
\end{definition}

\begin{definition}
A $k$-DGA  $A^{\ast}$  with a differential $d$  is {\em minimal}  if the following conditions hold.
\begin{itemize}
\item $A^{\ast}$ is a free graded algebra $\bigwedge \mathcal V^{\ast}$ generated by a graded $k$-vector space $\mathcal V^{\ast}$ with $\ast\ge 1$.

\item $\mathcal V^{\ast}$ admits a basis $\{x_{i}\}$ which is indexed by a well-ordered set such that $d x_{i}\in \bigwedge \langle \{x_{j} \vert j<i\}\rangle$

\item $d(\mathcal V^{\ast})\subset \bigwedge ^{2} \mathcal V^{\ast}$.

\end{itemize}
If a minimal  $k$-DGA $A^{\ast}$ is $1$-{\em minimal} if  $A^{\ast}$ is  generated by elements of degree $1$.
\end{definition}


\begin{definition}
Let $A^{\ast}$ be a $k$-DGA with $H^{0}(A^{\ast})=k$.
A minimal (resp. $1$-minimal)  $k$-DGA $\mathcal M$ is the {\em minimal model} (resp. $1$-{\em minimal model}) of $A^{\ast}$ if there is a quasi-isomorphism (resp. $1$-quasi-isomorphism) $\mathcal M\to A^{\ast}$.

\end{definition}

\begin{theorem}[\cite{Sul}]
For a $k$-DGA $A^{\ast}$ with $H^{0}(A^{\ast})=k$, 
the minimal model (resp. $1$-minimal model) of $A^{\ast}$ exists and it is unique up to $k$-DGA isomorphism.
\end{theorem}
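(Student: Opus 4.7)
The plan is to construct $\mathcal M$ by a step-by-step, free-generator-adjoining procedure in which, at each stage, one adds exactly the generators needed to improve the induced map on cohomology, and to prove uniqueness by a lifting argument that exploits the freeness of the minimal target.

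For the full minimal model, I would proceed by degree. Start with $\mathcal M_0=k$ and suppose inductively that one has built a minimal DGA $\mathcal M_n$ freely generated in degrees $\le n$ together with $\phi_n\colon\mathcal M_n\to A^{\ast}$ inducing isomorphisms on $H^{\le n}$ and injections on $H^{n+1}$. Then adjoin a graded vector space $V^{n+1}$ of generators in degree $n+1$ with $dV^{n+1}=0$, chosen so that the composite map to $H^{n+1}(A^{\ast})$ surjects onto the cokernel of $H^{n+1}(\phi_n)$; then adjoin another space $W^{n+1}$ in degree $n+1$ with differential landing in the degree $n+2$ part of $\mathcal M_n\otimes\bigwedge V^{n+1}$ so as to kill the kernel in $H^{n+2}$ that was created (pick cocycle bounding pairs in $A^{\ast}$ to extend $\phi_n$). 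A standard check shows this preserves the minimality condition $d(\mathcal V^{\ast})\subset\bigwedge^2\mathcal V^{\ast}$, because new generators are closed or their differentials are decomposable by construction. The direct limit $\mathcal M=\bigcup_n\mathcal M_n$ with the induced map is the desired minimal model.

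For the $1$-minimal model the same scheme is carried out within degree $1$ only: set $\mathcal M(1)=\bigwedge H^1(A^{\ast})$ with $d=0$ and $\phi(1)$ given by cocycle representatives, then inductively define $\mathcal M(k+1)=\mathcal M(k)\otimes\bigwedge V_k$ with $V_k$ in degree $1$ and $d\colon V_k\xrightarrow{\sim}\ker\bigl(H^2(\mathcal M(k))\to H^2(A^{\ast})\bigr)$, lifting $\phi(k)$ by choosing primitives in $A^1$ for each new generator. Again the decomposability of $d$ on new generators is automatic (any element of $\ker H^2(\mathcal M(k))$ is decomposable because $\mathcal M(k)$ is freely generated in degree $1$, so already $\bigwedge^{\ge 1}V\cdot\bigwedge^{\ge 1}V$ contains all $2$-forms). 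The colimit $\mathcal M=\bigcup_k\mathcal M(k)$ is $1$-minimal and the colimit map is a $1$-quasi-isomorphism.

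Uniqueness I would deduce from the key lifting lemma: if $\mathcal N$ is minimal, $\psi\colon B^{\ast}\to C^{\ast}$ is a quasi-isomorphism (respectively a $1$-quasi-isomorphism) and $f\colon\mathcal N\to C^{\ast}$ is a DGA map, then $f$ lifts through $\psi$ up to DGA homotopy (in the $1$-minimal setting the lift exists when $\mathcal N$ is $1$-minimal). Applied to two minimal models $\mathcal M,\mathcal M'$ of $A^{\ast}$ one obtains mutually inverse-up-to-homotopy maps between them, and minimality upgrades a homotopy equivalence to an honest isomorphism generator-by-generator. The lifting lemma itself is proved by the same inductive adjunction scheme: one extends the lift generator-by-generator, using at each step that the obstruction lies in a cohomology group on which $\psi$ is either an isomorphism or an injection sufficient for the obstruction to vanish.

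The main obstacle is bookkeeping at the inductive step, namely simultaneously preserving minimality (decomposability of the differential on new generators) while making the right correction to cohomology; in the $1$-minimal case this requires the extra observation that classes one wants to kill are automatically decomposable so that adding only degree-$1$ generators suffices. The other delicate point is the obstruction analysis in the lifting lemma for the $1$-minimal version, where only iso on $H^0,H^1$ and injection on $H^2$ are available, but these are exactly enough to lift a DGA map freely generated in degree $1$.
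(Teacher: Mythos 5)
The paper offers no proof of this statement: it is quoted directly from Sullivan \cite{Sul} (see also \cite{ABC}, \cite{DGMS}), and the only construction actually spelled out in the text is the tower $\mathcal M^{\ast}(1)\subset\mathcal M^{\ast}(2)\subset\cdots$ for the $1$-minimal model in Section \ref{1msul}, which is exactly the scheme you describe for that case. So there is no in-paper argument to compare against; judged on its own, your outline is the standard construction from the cited references and is essentially correct, including the decomposability observations and the uniqueness-via-lifting strategy (the lifting lemma together with the fact that a quasi-isomorphism between minimal DGAs is an isomorphism).

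One step as written would fail, however. For the full minimal model you assert that at each degree a single adjunction of closed generators $V^{n+1}$ (to achieve surjectivity on $H^{n+1}$) followed by a single adjunction $W^{n+1}$ (to kill the kernel on $H^{n+2}$) produces a map that is an isomorphism on $H^{\le n+1}$ and injective on $H^{n+2}$. That is correct when $H^{1}(A^{\ast})=0$, but the theorem only assumes $H^{0}(A^{\ast})=k$. When degree-one generators are present, products of the new $W^{n+1}$ generators with old generators create fresh classes in $H^{n+2}$ that may again map to zero under the extended $\phi$, so each degree in general requires an infinite (countable) tower of adjunctions, exactly as in your own degree-one construction; the well-ordered generating set in the paper's definition of minimality exists precisely to accommodate this. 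With that amendment your inductive hypothesis can be achieved as a colimit within each degree, and the rest of the argument goes through as in \cite{Sul}.
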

Consequently,  if $A$ is quasi-isomorphic ($1$-quasi-isomorphic) to a $k$-DGA $B$, then $A^{\ast}$ and $B^{\ast}$ have the same minimal model (resp. $1$-minimal model).

\begin{definition}\label{formal}
Let $A^{\ast}$ be a $k$-DGA.
We say that $A^{\ast}$ is {\em formal} (resp. $1$-{\em formal}) if  $A^{\ast}$ is quasi-isomorphic (resp. $1$-quasi-isomorphic) to the $k$-DGA $H^{\ast}(A^{\ast})$ with the trivial differential.
\end{definition}

\begin{theorem}[\cite{DGMS}]
Let $M$ be a compact K\"ahler manifold.
Then the de Rham complex $A^{\ast}(M)$ is formal.
\end{theorem}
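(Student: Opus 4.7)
The plan is to produce an explicit zig-zag of DGA quasi-isomorphisms between $(A^{\ast}(M),d)$ and $(H^{\ast}(M,\C),0)$, using only Hodge-theoretic machinery available on a compact K\"ahler manifold. The central ingredient will be the $dd^c$-lemma: writing $d^c = J^{-1}dJ$ for the complex structure $J$ extended to forms, one has on a compact K\"ahler manifold that any form which is $d$-closed and $d^c$-exact is automatically $dd^c$-exact, and similarly for each of the analogous mixed ``closed/exact'' combinations. This follows from the K\"ahler identities together with the standard $\Delta_{d}$-Hodge decomposition, and I would quote its derivation rather than redo it.

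Granting this, I would consider the sub-DGA $B^{\ast} := \ker d^c \subset A^{\ast}(M)$ equipped with the restricted differential $d$ (well-defined since $d^cd = -dd^c$), together with the inclusion $i : B^{\ast}\hookrightarrow A^{\ast}(M)$ and the natural projection $p : B^{\ast}\to (H^{\ast}_{d^c}(A^{\ast}(M)),0)$ onto $d^c$-cohomology. The map $p$ is a DGA morphism whose target automatically carries the zero differential: for $\alpha\in \ker d^c$, the form $d\alpha$ is $d$-exact and $d^c$-closed, so the $dd^c$-lemma gives $d\alpha = dd^c\beta = -d^c(d\beta)$, which is $d^c$-exact.

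The technical heart of the argument is then to check that both $i$ and $p$ are quasi-isomorphisms; each surjectivity/injectivity statement is a one-line application of the $dd^c$-lemma to a carefully chosen correction form. For example, to lift a $d$-cohomology class through $i$, starting from $d\alpha=0$ one uses the $dd^c$-lemma on $d^c\alpha$ to write $d^c\alpha = dd^c\beta$, and then $\alpha+d\beta$ is a cohomologous $d^c$-closed representative lying in $B^{\ast}$; the remaining verifications are analogous. After the zig-zag $A^{\ast}(M) \xleftarrow{i} B^{\ast} \xrightarrow{p} (H^{\ast}_{d^c}(A^{\ast}(M)),0)$ is established, I would identify the target with $(H^{\ast}(M,\C),0)$ via the isomorphism $H^{\ast}_{d^c}\cong H^{\ast}_{d}$ coming from the K\"ahler identity $\Delta_{d} = 2\Delta_{d^c}$, so that harmonic representatives for the two differentials coincide.

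The main obstacle in this approach is really just the $dd^c$-lemma itself; once that input is available, everything else is formal and purely organisational, and the argument is entirely insensitive to the dimension or to the particular K\"ahler metric used.
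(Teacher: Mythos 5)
Your argument is correct and is precisely the standard $dd^c$-lemma proof from the cited reference \cite{DGMS} (the paper itself states this theorem without proof), namely the zig-zag $A^{\ast}(M)\hookleftarrow \ker d^{c}\twoheadrightarrow H^{\ast}_{d^{c}}$ with all verifications reducing to the $dd^c$-lemma exactly as you indicate. One harmless slip: the K\"ahler identity is $\Delta_{d}=\Delta_{d^{c}}\ (=2\Delta_{\bar\partial})$, not $\Delta_{d}=2\Delta_{d^{c}}$, but the conclusion that the two harmonic spaces coincide, and hence $H^{\ast}_{d^{c}}\cong H^{\ast}(M,\C)$, is unaffected.
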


\begin{example}\label{nilmf}
Let $N$ be a real   simply connected nilpotent Lie group with a lattice (i. e. cocompact discrete subgroup) $\Gamma$.
Then the compact quotient $\Gamma\backslash N$ is called nilmanifold.
Let $\frak n$ be the Lie algebra of $N$.
Consider  the cochain  complex  $\bigwedge {\frak n}^{\ast}$ with the differential $d$ which is the dual to the Lie bracket.
Then, by the nilpotency, we can easily check that  $\bigwedge {\frak n}^{\ast}$ is a minimal DGA.
We regard $\bigwedge {\frak n}^{\ast}$ as the space of left-invariant differential forms on  $\Gamma\backslash N$.
Let $A^{\ast}(\Gamma\backslash N)$ be the de Rham complex of $\Gamma\backslash N$.
In \cite{Nom}, Nomizu proved that the canonical inclusion $\bigwedge {\frak n}^{\ast}\hookrightarrow A^{\ast}(\Gamma\backslash N)$ induces a cohomology isomorphism.
Thus the DGA $\bigwedge {\frak n}^{\ast}$ is the minimal model of  the de Rham complex $A^{\ast}(\Gamma\backslash N)$.
Since $\bigwedge {\frak n}^{\ast}$  is generated by elements of degree $1$, $\bigwedge {\frak n}^{\ast}$ is $1$-minimal and hence it is also the $1$-minimal model of  $A^{\ast}(\Gamma\backslash N)$.
In \cite{H}, the DGA $\bigwedge {\frak n}^{\ast}$ is formal if and only if the Lie algebra $\frak n$ is Abelian.
Hence, $A^{\ast}(\Gamma\backslash N)$ is formal  if and only if the nilmanifold  $\Gamma\backslash N$ is a torus.
Consequently, K\"ahler nilmanifolds are only tori.
\end{example}

\begin{remark}
In \cite{FM}, Fern\'andez and  Mu\~noz define the "$s$-formality" (\cite[Definition 2.4]{FM}).
We notice that the  $1$-formality in the sense of Fern\'andez-Mu\~noz is essentially different from the $1$-formaity as in Definition \ref{formal}.
In fact, for a nilpotent Lie algebra $\g$, the DGA $\bigwedge \g^{\ast}$ is $1$-formal  in the sense of Fern\'andez-Mu\~noz if and only if $\g$ is Abelian.
On the other hand, there exist non-Abelian nilpotent Lie algebras $\g$ such that  $\bigwedge \g^{\ast}$ is $1$-formal as in Definition \ref{formal}.
\end{remark}

\section{$1$-minimal model and $1$-formality}\label{1msul}



We  see the construction of $1$-minimal model of the de Rham complex in detail.
The article \cite[Chapter 3]{ABC} is a good reference.
Let $A^{\ast}$ be a $k$-DGA
 and $\mathcal M$  the $1$-minimal model of $A^{\ast}$  with a $1$-quasi-isomorphism $\phi:\mathcal M^{\ast}\to A^{\ast}$.
Then we have the canonical sequence of DGAs
\[{\mathcal M}^{\ast}(1)\subset {\mathcal M}^{\ast}(2)\subset\dots 
\]
such that:
\begin{enumerate}
\item $\mathcal M^{\ast}=\bigcup_{k=1}^{\infty} \mathcal M^{\ast}(k)$.
\item ${\mathcal M}^{\ast}(1)=\bigwedge {\mathcal V}_{1}$ with the trivial differential such that $\phi$ induces an isomorphism 
${\mathcal V}_{1}\to H^{1}(A^{\ast})$.
\item Consider the map $\phi_{1}:   {\mathcal V}_{1}\wedge {\mathcal V}_{1}\to H^{2}(A^{\ast})$ induced by $\phi$.
We have
${\mathcal M}^{\ast}(2)={\mathcal M}^{\ast}(1)\otimes \bigwedge {\mathcal V}_{2}$ such that the differential $d$ induces an isomorphism ${\mathcal V}_{2}\to {\rm Ker}\, \phi_{1}$.
\item Consider the map $\phi_{n}:   H^{2}({\mathcal M}^{\ast}(n))\to H^{2}(A^{\ast})$ induced by $\phi$ for each integer $n$.
We have
${\mathcal M}^{\ast}(n+1)={\mathcal M}^{\ast}(n)\otimes \bigwedge {\mathcal V}_{n+1}$ such that the differential $d:{\mathcal V}_{n+1}\to {\mathcal M}^{2}(n)$ induces an isomorphism ${\mathcal V}_{n+1}\to {\rm Ker}\, \phi_{n}$.

\end{enumerate}

 Let $M$  be a manifold and $A^{\ast}(M)$ the de Rham complex.
Suppose $A^{\ast}=A^{\ast}(M)$.
Dualizing  the sequence
\[{\mathcal M}^{\ast}(1)\subset {\mathcal M}^{\ast}(2)\subset \dots, 
\]
we obtain the tower of real nilpotent Lie algebras
\[\dots \to L_{2}\to L_{1}\to 0.
\]
Sullivan showed that this tower gives the Malcev Lie algebra of the fundamental group $\pi_{1}(M)$ (see \cite{DGMS}).

\begin{proposition}{\rm(\cite[Chapter 3]{ABC})}\label{quadr}
Let $M$ be a manifold.

Then the following conditions are equivalent:
\begin{enumerate}
 \item $A^{\ast}(M)$  is $1$-formal.

\item  The Malcev Lie algebra of the fundamental group $\pi_{1}(M)$  admits a quadratic presentation.

\item Consider the $1$-minimal model ${\mathcal M}^{\ast}$ of $A^{\ast}(M)$ and  the canonical sequence of DGAs
\[{\mathcal M}^{\ast}(1)\subset {\mathcal M}^{\ast}(2)\subset\dots 
\]
as above.
Then the map $H^{2}({\mathcal M}^{\ast}(1))\to H^{2}({\mathcal M}^{\ast})$ is surjective.
\end{enumerate}
\end{proposition}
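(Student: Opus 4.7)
I would tackle (1)$\Leftrightarrow$(2) first, using the duality between $1$-minimal models and Malcev Lie algebras sketched at the start of this section. Dualizing the tower $\mathcal{M}^{\ast}(1)\subset \mathcal{M}^{\ast}(2)\subset\cdots$ gives the tower $\dots\to L_2\to L_1\to 0$ whose inverse limit is the Malcev Lie algebra of $\pi_1(M)$; under this duality, $\mathcal{V}_1^{\ast}$ furnishes the abelianization and the differentials $d:\mathcal{V}_{n+1}\to \mathcal{M}^2(n)$ dualize to the successive relations of the pronilpotent presentation. A quadratic presentation means every relation can be placed in bracket-degree two, which is equivalent to being able to normalize all the differentials $dv$ to lie in $\mathcal{V}_1\wedge \mathcal{V}_1$. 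This normalized structure is exactly the $1$-minimal model of the cohomology DGA $(H^{\ast}(A^{\ast}(M),\C),0)$ with $\mathcal{V}_1\cong H^1$, $d=0$, $\mathcal{V}_2\cong \ker(H^1\wedge H^1\to H^2)$ with $d$ the inclusion, and higher $\mathcal{V}_n$ built analogously with quadratic differentials. This is precisely the definition of $1$-formality of $A^{\ast}(M)$.

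For (1)$\Rightarrow$(3), I would fix a $1$-quasi-isomorphism $\psi:\mathcal{M}^{\ast}\to (H^{\ast}(A^{\ast}(M),\C),0)$ produced from $1$-formality and the uniqueness of $1$-minimal models. Because the target has zero differential, I can choose $\psi$ so that $\psi|_{\mathcal{V}_1}$ is the canonical isomorphism with $H^1$ and $\psi|_{\mathcal{V}_n}=0$ for $n\ge 2$, inductively absorbing the freedom into $H^1$. For a closed $\omega\in \mathcal{M}^2$, only the $\mathcal{V}_1\wedge \mathcal{V}_1$ component contributes to $\psi(\omega)$, and the injectivity of $\psi^{\ast}$ on $H^2$ forces $[\omega]$ to be represented already by that component, giving the desired surjectivity.

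For (3)$\Rightarrow$(1), I would define $\psi:\mathcal{M}^{\ast}\to (H^{\ast}(A^{\ast}(M),\C),0)$ by the same recipe: $\psi|_{\mathcal{V}_1}$ is the iso with $H^1$ and $\psi|_{\mathcal{V}_n}=0$ for $n\ge 2$, extended multiplicatively. Compatibility with differentials reduces to $\psi(dv)=0$ for $v\in \mathcal{V}_{n+1}$ with $n\ge 1$; this is the statement that the cup product of the $\mathcal{V}_1\wedge \mathcal{V}_1$ part of $dv$ vanishes in $H^2(M,\C)$. By construction $[dv]\in \ker \phi_n\subset H^2(\mathcal{M}^{\ast}(n))$; by hypothesis (3) applied inductively, this class is represented by some $\alpha\in \mathcal{V}_1\wedge \mathcal{V}_1$, and $\phi_n(\alpha)$ equals the cup product of the $\mathcal{V}_1\wedge \mathcal{V}_1$ part of $dv$ in $H^2(M,\C)$. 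Since $[dv]$ lies in $\ker \phi_n$, this cup product is zero, so $\psi$ is a well-defined DGA map, and it is a $1$-quasi-isomorphism because it restricts to an iso on $H^1=\mathcal{V}_1$ and $\psi^{\ast}$ on $H^2$ agrees with $\phi^{\ast}|_{H^2(\mathcal{M}^{\ast}(1))}$ under surjectivity of (3).

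The main obstacle I expect is the inductive bookkeeping in (3)$\Rightarrow$(1): the hypothesis (3) gives representatives in $\mathcal{V}_1\wedge\mathcal{V}_1$ for classes of $H^2(\mathcal{M}^{\ast})$, but one must verify that this control propagates stagewise to the differentials $dv$ at each step, so that the recursive definition of $\psi$ remains consistent through the full tower $\mathcal{M}^{\ast}(n)$.
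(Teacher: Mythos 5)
The paper does not actually prove this proposition --- it is quoted from \cite[Chapter 3]{ABC} with no argument given --- so there is no in-paper proof to compare against; I can only assess your argument on its own terms, and it contains a genuine error at its core. Your reduction of ``quadratic presentation'' to ``all differentials $dv$ can be normalized to lie in $\mathcal{V}_{1}\wedge\mathcal{V}_{1}$'', and the accompanying claim that the $1$-minimal model of $(H^{\ast}(A^{\ast}(M)),0)$ has all quadratic differentials, are both false. Take $M=\#^{2}(S^{1}\times S^{2})$ (or any space with $\pi_{1}=F_{2}$ free of rank $2$): the Malcev Lie algebra is the completed free Lie algebra, hence quadratically presented, and $M$ is $1$-formal; but the $1$-minimal model has $\mathcal{V}_{1}=\langle x_{1},x_{2}\rangle$ with zero cup product, $\mathcal{V}_{2}=\langle v\rangle$ with $dv=x_{1}\wedge x_{2}$, and then $x_{1}\wedge v$, $x_{2}\wedge v$ are closed, non-exact classes in $H^{2}(\mathcal{M}^{\ast}(2))$ that must eventually be killed by new generators $w$ with $dw\in\mathcal{V}_{1}\wedge\mathcal{V}_{2}$; these differentials cannot be moved into $\bigwedge^{2}\mathcal{V}_{1}$, because the only closed elements there are multiples of the exact form $dv$. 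The correct duality is that the \emph{relations} of the Malcev Lie algebra correspond to $H^{2}(\mathcal{M}^{\ast})\cong H_{2}(L)^{\ast}$ with its weight grading (while the generators $\mathcal{V}_{n}$, $n\ge 2$, are dual to the lower-central-series quotients and are nonzero in all degrees even for free groups); quadratic presentation means $H_{2}(L)$ is concentrated in weight two, which is exactly the surjectivity in (3). Your conflation of generators' differentials with relations is the missing idea.

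The same confusion undermines (3)$\Rightarrow$(1): hypothesis (3) controls only the classes that \emph{survive} to $H^{2}(\mathcal{M}^{\ast})$, whereas the classes $[dv]\in\ker\phi_{n}$ are by construction precisely the ones that get killed at the next stage, so (3) provides no representative in $\mathcal{V}_{1}\wedge\mathcal{V}_{1}$ for them --- in the free-group example above those kernel classes are not in the image of $H^{2}(\mathcal{M}^{\ast}(1))\to H^{2}(\mathcal{M}^{\ast}(n))$ at all. You flag this as ``bookkeeping,'' but it is the actual content of the implication, and your proposed map $\psi$ (canonical on $\mathcal{V}_{1}$, zero on $\mathcal{V}_{n}$ for $n\ge 2$) is not obviously well defined without it: one must show that the $\bigwedge^{2}\mathcal{V}_{1}$-component of each $dv$ has trivial cup-product image, which does not follow from $[dv]\in\ker\phi_{n}$ alone since the remaining components of $dv$ need not map to something exact. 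A correct argument (as in \cite{ABC}) compares $\mathcal{M}^{\ast}$ with the $1$-minimal model of $(H^{\ast},0)$ stage by stage, or passes through the holonomy Lie algebra; I would encourage you to rework the proof around the identification of $H^{2}(\mathcal{M}^{\ast})$ with the space of relations rather than around the differentials of the generators.
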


By the result in \cite{DGMS}, for a compact K\"ahler manifold $M$, the Malcev Lie algebra of  the fundamental group $\pi_{1}(M)$ admits a quadratic presentation.
For example, for a compact surface $S$ with the genus $g\ge 2$,  the Malcev Lie algebra of  the fundamental group $\pi_{1}(S)$  is the free Lie algebra on $X_{1},\dots X_{g}, Y_{1},\dots, Y_{g}$ modulo the relation $\sum_{i=1}^{g}[X_{i},Y_{i}]=0$ (see \cite[Section 12]{Sul}).

\begin{example}\label{HeiHei}
Let $N$ be a real   simply connected nilpotent Lie group with a lattice $\Gamma$ and $\frak n$ be the Lie algebra of $N$.
Then, as in Example \ref{nilmf}, the DGA $\bigwedge \frak n^{\ast}$ is the  $1$-minimal model of  $A^{\ast}(\Gamma\backslash N)$.
Since the fundamental group of the nilmanifold $\Gamma\backslash N$ is $\Gamma$, the Malcev Lie algebra of $\Gamma$ is $\frak n$.
Suppose $N=H_{2n+1}$ where  $H_{2n+1}$ is the $2n+1$-dimensional real  Heisenberg group.
Then the DGA $\bigwedge \frak n^{\ast}$ is given by $ \frak n^{\ast}=\langle x_{1},\dots ,x_{2n}, y\rangle$ so that $dx_{i}=0$ and 
\[dy=x_{1}\wedge x_{2}+\dots + x_{2n-1}\wedge x_{2n}.
\]
As in the above argument, we have ${\mathcal V}_{1}=\langle x_{1},\dots x_{2n}\rangle$ and  ${\mathcal V}_{2}=\langle y\rangle$.
Suppose  $n\ge 2$.
Then we have $H^{2}(\frak n)={\mathcal V}_{1}\wedge {\mathcal V}_{1}/ \langle dy\rangle$ and hence the map $H^{2}({\mathcal M}^{\ast}(1))\to H^{2}({\mathcal M}^{\ast})$ is surjective.
By Proposition \ref{quadr}, the Malcev Lie algebra of $\Gamma$ admits a quadratic presentation.
On the other hand, if $n=1$, then we can check that $\Gamma $ does not admit a quadratic presentation.
Consequently, a lattice in  the $3$-dimensional real  Heisenberg group can not be the fundamental  group  of any compact K\"ahler manifold.
\end{example}

See \cite{CT} for more examples of nilpotent groups admitting quadratic presentations.

\section{Morgan's mixed Hodge diagrams}
\begin{definition}{\rm (\cite[Definition 3.5]{Mor})}
An $\R$-{\em mixed-Hodge diagram} is a pair of filtered $\R$-DGA $(A^{\ast}, W_{\ast})$ and bifiltered $\C$-DGA $(E^{\ast}, W_{\ast},F^{\ast})$ and filtered DGA map $\phi:(A^{\ast}_{\C},W_{\ast})\to (E^{\ast},W_{\ast})$  such that:
\begin{enumerate}
\item $\phi$  induces an isomorphism  $\phi^{\ast}:\,_{W}E^{\ast,\ast}_{1}(A^{\ast}_{\C})\to \,_{W}E^{\ast,\ast}_{1}(E^{\ast})$ where $ \,_{W}E_{\ast}^{\ast,\ast}(\cdot)$ is the spectral sequence for the decreasing filtration $W^{\ast}=W_{-\ast}$.
\item The differential $d_{0}$ on $\,_{W}E^{\ast,\ast}_{0}(E^{\ast})$ is strictly compatible with the filtration induced by $F$.
\item The filtration on $\,_{W}E_{1}^{p,q}(E^{\ast})$ induced by $F$ is a $\R$-Hodge structure of weight $q$ on $\phi^{\ast}(\,_{W}E^{\ast,\ast}_{1}(A^{\ast}))$.

\end{enumerate}
\end{definition}

\begin{theorem}\label{midimi}{\rm (\cite[Theorem 4.3]{Mor})}
Let $\{(A^{\ast}, W_{\ast}), (E^{\ast}, W_{\ast},F^{\ast}),\phi\}$ be an $\R$-mixed-Hodge diagram.
Define the filtration $W^{\prime}_{\ast}$ on $H^{r}(A^{\ast})$ (resp $H^{r}(E^{\ast}))$
as $W^{\prime}_{i}H^{r}(A^{\ast})=W_{i-r}(H^{r}(A^{\ast}))$ (resp. $W^{\prime}_{i}H^{r}(E^{\ast})=W_{i-r}(H^{r}(E^{\ast}))$).
Then the filtrations $W^{\prime}_{\ast}$ and $F^{\ast}$ on $H^{r}(E^{\ast})$ give an $\R$-mixed-Hodge  on 
$\phi^{\ast}(H^{r}(A^{\ast}))$.
\end{theorem}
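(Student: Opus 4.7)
The plan is to analyze the weight spectral sequence $\,_{W}E^{\ast,\ast}_{r}(E^{\ast})$, transfer the pure Hodge structures from its $E_{1}$-page to $E_{\infty}$, and then reindex to identify $E_{\infty}$ with the graded pieces of the shifted filtration $W'$. Condition (1) provides an isomorphism $\,_{W}E^{\ast,\ast}_{1}(A^{\ast}_{\C})\xrightarrow{\sim}\,_{W}E^{\ast,\ast}_{1}(E^{\ast})$, so each term carries a real structure inherited from $A^{\ast}$. Condition (3) then says that together with the $F$-filtration this gives a pure $\R$-Hodge structure of weight $q$ on $\,_{W}E^{p,q}_{1}(E^{\ast})$.

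The first key step is to show that $d_{1}:\,_{W}E^{p,q}_{1}\to\,_{W}E^{p+1,q}_{1}$ is a morphism of pure $\R$-Hodge structures of weight $q$. This relies on condition (2), the strict $F$-compatibility of $d_{0}$. By the standard two-filtration lemma in mixed Hodge theory, this strictness propagates to the $E_{1}$-page so that $d_{1}$ preserves the induced $F$-filtration; combined with its compatibility with the real structure, it respects the Hodge decomposition. The second key step is degeneration at $E_{2}$: any differential $d_{r}$ for $r\ge 2$ would map a pure weight-$q$ Hodge structure to a pure weight-$(q-r+1)$ one, and morphisms between pure Hodge structures of distinct weights vanish. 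Hence $\,_{W}E^{p,q}_{\infty}=\,_{W}E^{p,q}_{2}$, which, as the cohomology of a complex of weight-$q$ Hodge morphisms, inherits a pure Hodge structure of weight $q$.

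Transporting along the convergence isomorphism $\,_{W}E^{p,q}_{\infty}\cong \mathrm{Gr}^{W}_{-p}H^{p+q}(E^{\ast})$ and substituting $r=p+q$, $n=q$ yields a pure Hodge structure of weight $n$ on $\mathrm{Gr}^{W}_{n-r}H^{r}(E^{\ast})=\mathrm{Gr}^{W'}_{n}H^{r}(E^{\ast})$. Since $W$ is $\R$-rational on $A^{\ast}$, the filtration $W'$ restricts to an $\R$-rational filtration on $\phi^{\ast}(H^{r}(A^{\ast}))$, and together with $F^{\ast}$ this gives the claimed $\R$-mixed Hodge structure. The main technical obstacle is the first step: one must carefully invoke the two-filtration lemma to propagate strict $F$-compatibility from $d_{0}$ to $d_{1}$; granted this, the vanishing of higher differentials and the identification of $E_{\infty}$ with $\mathrm{Gr}^{W}H^{\ast}$ are essentially formal.
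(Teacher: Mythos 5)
The paper gives no proof of this statement---it is quoted directly from Morgan \cite[Theorem 4.3]{Mor}---so there is no internal argument to compare against; your sketch reproduces the standard Morgan--Deligne proof (real structure and weight-$q$ Hodge structures on the $E_{1}$-page of the weight spectral sequence via conditions (1) and (3), $d_{1}$ a morphism of pure Hodge structures via condition (2) and Deligne's two-filtration lemma, degeneration at $E_{2}$ because higher differentials would connect pure structures of distinct weights, and the shift $W^{\prime}_{i}=W_{i-r}$ renormalizing $\mathrm{Gr}^{W}_{n-r}H^{r}$ to weight $n$). That is exactly the intended argument and the sketch is sound, including the correct identification of where the technical weight of the proof lies (the two-filtration lemma).
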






\begin{theorem}{\rm (\cite[Section 6]{Mor})]}\label{MMM11}
Let $\{(A^{\ast}, W_{\ast}), (E^{\ast}, W_{\ast},F^{\ast}),\phi\}$ be an $\R$-mixed-Hodge diagram.
Then the minimal model (resp.  $1$-minimal model) $\mathcal M^{\ast}$  of the DGA $E^{\ast}$ with a quasi-isomorphism (resp. $1$-quasi-isomorphism) $\phi:\mathcal M^{\ast}\to E^{\ast}$ 
satisfies the following conditions:
\begin{itemize}
\item $\mathcal M^{\ast}$ admits a bigrading 
\[\mathcal M^{\ast}=\bigoplus_{p,q\ge0}\mathcal M^{\ast}_{p,q}
\]
such that $\mathcal M^{\ast}_{0,0}=\mathcal M^{0}=\C$ and 
the product and the differential are of type $(0,0)$.


\item 
Consider the bigrading $H^{\ast}(E^{\ast})=\bigoplus V_{p,q}$ for the  $\R$-mixed-Hodge structure  as in Theorem \ref{midimi}.
Then  $\phi^{\ast}:H^{\ast}(\mathcal M^{\ast})\to H^{\ast}(E^{\ast})$ 
sends $H^{\ast}(\mathcal M^{\ast}_{p,q})$ to $V_{p,q}$.
\end{itemize}
\end{theorem}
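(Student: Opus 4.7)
The plan is to mimic the inductive construction of the minimal model recalled in Section \ref{1msul}, but to carry a bigrading along at every stage. By Theorem \ref{midimi}, $H^\ast(E^\ast)$ carries an $\R$-mixed-Hodge structure, and Proposition \ref{BIGG} turns this into a canonical bigrading $H^\ast(E^\ast)=\bigoplus V_{p,q}$. The aim is to build $\mathcal M^\ast$ together with $\phi$ so that the generating vector space of $\mathcal M^\ast$ carries a bigrading which, after passing to cohomology, matches this $V_{p,q}$ decomposition.

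For the base case (1-minimal model), take $\mathcal M(1)=\bigwedge \mathcal V_1$ with $\mathcal V_1\cong H^1(E^\ast)$ and the bigrading inherited from Proposition \ref{BIGG}, extended multiplicatively so that the product is of type $(0,0)$. Choose $\phi$ on $\mathcal V_1$ by lifting each bihomogeneous class in $V_{p,q}$ to a form in $E^1$ lying in the corresponding piece of the filtration $W_{p+q}\cap F^p$ modulo conjugates; the existence of such lifts is a direct consequence of conditions (1)--(3) of the mixed-Hodge-diagram definition. For the inductive step, suppose $\mathcal M(n)$ is bigraded and $\phi\colon\mathcal M(n)\to E^\ast$ preserves the bigrading, with the induced filtrations on $\mathcal M(n)$ forming (through $\phi$) a mixed-Hodge diagram together with $E^\ast$. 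Then $H^2(\mathcal M(n))$ inherits an $\R$-mixed-Hodge structure and $\phi_n\colon H^2(\mathcal M(n))\to H^2(E^\ast)$ is a morphism of mixed-Hodge structures, so $\ker\phi_n$ is a sub-MHS and hence splits off as a bigraded subspace by Proposition \ref{BIGG}. Let $\mathcal V_{n+1}$ be a bigraded copy of $\ker\phi_n$, with the differential realizing the identification $\mathcal V_{n+1}\cong \ker\phi_n$; set $\mathcal M(n+1)=\mathcal M(n)\otimes\bigwedge\mathcal V_{n+1}$, and extend $\phi$ by picking, for each bihomogeneous $x\in\mathcal V_{n+1}$ of bidegree $(p,q)$, a primitive $\phi(x)\in E^1$ of $\phi(dx)$ which itself lies in $W_{p+q}\cap F^p$ (and whose conjugate lies in the appropriate piece). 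Taking the union over $n$ produces the bigraded $\mathcal M^\ast$ with $d$ and product of type $(0,0)$; since $\phi$ preserves bidegree by construction, $\phi^\ast(H^\ast(\mathcal M^\ast_{p,q}))\subset V_{p,q}$ follows immediately, using the uniqueness part of Proposition \ref{BIGG}.

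The main obstacle is the lifting step in the induction: given a bihomogeneous $\xi\in E^2$ of bidegree $(p,q)$ that becomes cohomologous to zero in $H^2(E^\ast)$, one must exhibit an actual primitive $\eta\in E^1$ of the same bidegree. This is the bigraded analogue of the $\partial\bar\partial$-lemma for compact Kähler manifolds, and it is precisely what is packaged into the strict compatibility of $d_0$ with $F$ on $\,_WE_0^{\ast,\ast}(E^\ast)$ in condition (2) of a mixed-Hodge diagram, combined with the $E_1$-degeneration built into condition (1). Once this lifting is verified at each stage, both the existence of the bigrading and its compatibility with $V_{p,q}$ are purely formal consequences of the inductive construction.
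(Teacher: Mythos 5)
Your proposal follows essentially the same route as the paper: the paper does not prove Theorem \ref{MMM11} itself but cites \cite[Section 6]{Mor} and then describes exactly the inductive construction you outline, namely building the bigrading stage by stage on ${\mathcal M}^{\ast}(n+1)={\mathcal M}^{\ast}(n)\otimes \bigwedge {\mathcal V}_{n+1}$ with ${\mathcal V}_{n+1}$ bigraded compatibly with $d$ and with the Deligne splitting $\bigoplus V_{p,q}$ of $H^{\ast}(E^{\ast})$. Your identification of the filtered lifting step (strictness of $d_{0}$ with respect to $F$ plus the $E_{1}$-isomorphism of condition (1)) as the technical heart is consistent with Morgan's treatment, so no further comparison is needed.
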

We explain Morgan's construction for $1$-minimal models.
For an  $\R$-mixed-Hodge diagram $\{(A^{\ast}, W_{\ast}), (E^{\ast}, W_{\ast},F^{\ast}),\phi\}$,   let $\mathcal M^{\ast}$ be  the $1$-minimal model of $E^{\ast}$ with a $1$-quasi-isomorphism $\phi:\mathcal M^{\ast}\to E^{\ast}$.
Take the canonical sequence of DGAs
\[{\mathcal M}^{\ast}(1)\subset {\mathcal M}^{\ast}(2)\subset\dots 
\]
as in Section \ref{1msul}.
Then the bigrading  as in Theorem \ref{MMM11} constructed by the following inductive way.
 For each $n$,  we let $\mathcal M^{\ast}(n)$ have a bigrading
\[\mathcal M^{\ast}(n)=\bigoplus_{p,q\ge0}\mathcal M^{\ast}_{p,q}(n)
\]
such that:
\begin{itemize}
\item  $\mathcal M^{\ast}_{0,0}(n)=\mathcal M^{0}(n)=\C$ and 
the product and the differential are of type $(0,0)$.


\item 
Consider the bigrading $H^{\ast}(E^{\ast})=\bigoplus V_{p,q}$ for the  $\R$-mixed-Hodge structure  as in Theorem \ref{midimi}.
Then  $\phi^{\ast}:H^{\ast}(\mathcal M^{\ast}(n))\to H^{\ast}(E^{\ast})$ 
sends $H^{\ast}(\mathcal M^{\ast}_{p,q}(n))$ to $V_{p,q}$.

\item
For ${\mathcal M}^{\ast}(n+1)={\mathcal M}^{\ast}(n)\otimes \bigwedge {\mathcal V}_{n+1}$,
${\mathcal V}_{n+1}$ has a bigrading   such that  the differential $d:{\mathcal V}_{n+1}\to {\mathcal M}^{2}(n)$ is  compatible with bigradings and
   the bigrading on ${\mathcal M}^{\ast}(n+1)$  is the multiplicative extension of the bigradings on ${\mathcal V}_{n+1}$ and ${\mathcal M}^{\ast}(n)$.

\end{itemize}

\section{Mixed Hodge diagrams of Sasakian manifolds}

Let $M$ be a compact $(2n+1)$-dimensional Sasakian manifold with a Sasakian metric $g$ and 
 $\eta$ the contact structure associated with the Sasakian structure.
Take $\xi$ the Reeb vector field.
$\xi$ gives the $1$-dimensional foliation $\mathcal F_{\xi}$.
Let $A^{\ast}(M)$ be the de Rham complex of $M$.
A differential form $\alpha\in A^{\ast}(M)$ is basic if  $\iota_{\xi}\alpha=0$ and $\iota_{\xi}d\alpha=0$.
Denote by $A_{B}^{\ast}(M)$ the differential graded algebra  of the basic differential forms on $M$ and  denote by $H^{\ast}_{B}(M,\R)$ (resp. $H^{\ast}_{B}(M,\C)$) the cohomology of $A_{B}^{\ast}(M)$ (resp. $A_{B}^{\ast}(M)\otimes\C$).
For a closed basic form $\alpha$, we denote by $[\alpha]_{B}$ the the cohomology class of $\alpha$ in  $H^{\ast}_{B}(M,\R)$ (resp. $H^{\ast}_{B}(M,\C)$).

We have the transverse complex structure on ${\rm Ker}\, \iota_{\xi}\subset \bigwedge TM^{\ast}$ and we obtain the bigrading $A^{r}_{B}(M)\otimes \C=\bigoplus_{p+q=r} A^{p,q}_{B}(M)$ with the bidifferential $d=\partial_{B}+\bar\partial_{B}$.
We have the $\partial_{B}\bar\partial_{B}$-Lemma:
\begin{proposition}\label{del}{\rm (\cite[Proposition 3.7]{Ti})}
For each $r$, in $A^{r}_{B}(M)\otimes \C$, we have
\[{\rm Ker}\, \partial_{B}\cap  {\rm Ker}\, \bar\partial_{B}\cap {\rm Im}\, d={\rm Im}\, \partial_{B}\bar\partial_{B}.
\]
\end{proposition}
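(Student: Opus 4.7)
The approach is to reproduce the classical proof of the $\partial\bar\partial$-lemma for compact K\"ahler manifolds on the basic subcomplex, exploiting the fact that the Reeb foliation $\mathcal{F}_{\xi}$ of a Sasakian manifold carries a transverse K\"ahler structure (the transverse metric together with the transverse K\"ahler form $\tfrac{1}{2}d\eta$).

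The key preparatory step is to install basic Hodge theory on $A^{\ast}_{B}(M)\otimes\C$. Since the Reeb foliation is a Riemannian foliation with transverse K\"ahler structure, El Kacimi-Alaoui's transverse Hodge theory applies: the basic Laplacians $\Delta_{\partial_{B}}$, $\Delta_{\bar\partial_{B}}$ and $\Delta_{d_{B}}$ are transversally elliptic with finite-dimensional kernels, and each produces an $L^{2}$-orthogonal Hodge decomposition
\[
A^{\ast}_{B}(M)\otimes\C \;=\; \mathcal{H}^{\ast} \;\oplus\; \mathrm{Im}\,\bar\partial_{B} \;\oplus\; \mathrm{Im}\,\bar\partial_{B}^{\ast},
\]
with the analogous decompositions for $\partial_{B}$ and $d_{B}$. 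The transverse K\"ahler identities yield $\Delta_{d_{B}}=2\Delta_{\partial_{B}}=2\Delta_{\bar\partial_{B}}$, so the three harmonic projectors coincide, and $[\partial_{B},\bar\partial_{B}^{\ast}]=0$ follows from the basic analog of $\bar\partial_{B}^{\ast}=-i[\partial_{B},\Lambda_{B}]$, where $\Lambda_{B}$ is the adjoint of the transverse Lefschetz operator $L_{B}=\tfrac{1}{2}d\eta\wedge\,\cdot$.

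With these tools the proof mirrors the K\"ahler case. The inclusion $\mathrm{Im}\,\partial_{B}\bar\partial_{B}\subset \mathrm{Ker}\,\partial_{B}\cap\mathrm{Ker}\,\bar\partial_{B}\cap\mathrm{Im}\,d$ is immediate from $\partial_{B}^{2}=\bar\partial_{B}^{2}=0$ together with $\partial_{B}\bar\partial_{B}=d\bar\partial_{B}$. For the reverse, take $\omega \in \mathrm{Ker}\,\partial_{B}\cap\mathrm{Ker}\,\bar\partial_{B}\cap\mathrm{Im}\,d$, write $\omega=d\eta$ for some basic $\eta$, and expand
\[
\eta \;=\; h \;+\; \bar\partial_{B}\beta \;+\; \bar\partial_{B}^{\ast}\gamma
\]
using the $\bar\partial_{B}$-Hodge decomposition. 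Applying $d=\partial_{B}+\bar\partial_{B}$, the harmonic part drops out because a $d$-exact $\Delta_{\bar\partial_{B}}$-harmonic form must vanish (by the equality of Laplacians it is also $\Delta_{d_{B}}$-harmonic, hence represents a trivial de Rham class). The term $d\bar\partial_{B}\beta=\partial_{B}\bar\partial_{B}\beta$ already has the desired form, and the remaining contribution $d\bar\partial_{B}^{\ast}\gamma$ is rewritten using $\partial_{B}\bar\partial_{B}^{\ast}=-\bar\partial_{B}^{\ast}\partial_{B}$; feeding in $\bar\partial_{B}\omega=0$ forces the potentially obstructing summand to lie in $\mathrm{Im}\,\partial_{B}\bar\partial_{B}$, producing a basic $\gamma'$ with $\omega=\partial_{B}\bar\partial_{B}\gamma'$.

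The main obstacle is not the formal algebraic manipulation, which is essentially the K\"ahler argument, but the transverse Hodge-theoretic input: one must justify ellipticity, Hodge decomposition and K\"ahler identities on the basic subcomplex, which is not the complex of sections of a vector bundle over a compact manifold in the usual sense. This is the content of El Kacimi-Alaoui's general framework for transversely Hermitian Riemannian foliations with all leaves dense, specialised to the Sasakian setting where the transverse structure is K\"ahler and the Reeb flow preserves all the relevant operators, so that the classical arguments transfer verbatim to $A^{\ast}_{B}(M)\otimes\C$.
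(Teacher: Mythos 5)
The paper does not prove this proposition at all: it is quoted verbatim from Tievsky's thesis (\cite[Proposition 3.7]{Ti}), so there is no internal argument to compare against. Your reconstruction is the standard one and is essentially what the cited source does: the Reeb foliation of a compact Sasakian manifold is a Riemannian foliation with transverse K\"ahler structure (transverse K\"ahler form $\tfrac{1}{2}d\eta$), El~Kacimi-Alaoui's transverse Hodge theory gives transversally elliptic basic Laplacians with finite-dimensional harmonic spaces and orthogonal decompositions, the transverse K\"ahler identities give $\Delta_{d_B}=2\Delta_{\partial_B}=2\Delta_{\bar\partial_B}$, and then the classical $\partial\bar\partial$-argument goes through on the basic subcomplex. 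So your approach is correct and coincides with the intended proof.

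Two small points. First, your last step is compressed: after killing the harmonic part and the term $\partial_B\bar\partial_B\beta$, the leftover from $d\bar\partial_B^{\ast}\gamma$ splits as $\partial_B\bar\partial_B^{\ast}\gamma+\bar\partial_B\bar\partial_B^{\ast}\gamma$; the first summand vanishes by pairing $\bar\partial_B\bar\partial_B^{\ast}\partial_B\gamma=0$ against $\partial_B\gamma$ (using $\partial_B\bar\partial_B^{\ast}=-\bar\partial_B^{\ast}\partial_B$ and $\bar\partial_B\omega=0$), and the second requires one further application of the decomposition (with respect to $\partial_B$) to land in $\mathrm{Im}\,\partial_B\bar\partial_B$ --- as written, ``forces the potentially obstructing summand to lie in $\mathrm{Im}\,\partial_B\bar\partial_B$'' papers over this. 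Second, the hypothesis you attribute to El~Kacimi-Alaoui, ``all leaves dense,'' is neither needed nor generally satisfied (quasi-regular Sasakian structures have closed Reeb orbits); what is needed is that the foliation be Riemannian and homologically orientable, which holds here because the Reeb flow is isometric. Neither point affects the validity of the approach.
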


We define the DGA $A^{\ast}=H^{\ast}_{B}(M,\R)\otimes \bigwedge \langle y \rangle$ such that $y$ is an element of degree $1$ and $dy=[d\eta]_{B}$.
By using Proposition \ref{del}, we obtain the following theorem.
\begin{theorem}\label{for}{\rm (\cite[Section 4.3]{Ti})}
The DGAs $A^{\ast}$ and $A^{\ast}(M)$ are quasi-isomorphic.
\end{theorem}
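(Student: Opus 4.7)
The plan is to construct a zig-zag
\[
A^{\ast}(M)\otimes\C\;\xleftarrow{\;\iota\;}\;\mathcal{I}^{\ast}\;\xrightarrow{\;\pi\;}\;A^{\ast}\otimes\C
\]
of $\C$-DGA quasi-isomorphisms through the bridging DGA
\[
\mathcal{I}^{\ast}\;:=\;A_{B}^{\ast}(M)\otimes\C\otimes\bigwedge\langle\tilde{y}\rangle,\qquad d\tilde{y}\;=\;d\eta,
\]
where $\tilde{y}$ is a free degree-one generator; this is well-defined because $d\eta$ is a closed basic $(1,1)$-form. The map $\iota$ is the inclusion on the basic factor together with $\tilde{y}\mapsto\eta$, and $\pi$ projects $A_{B}^{\ast}(M)\otimes\C\to H_{B}^{\ast}(M,\C)$ on the basic factor and sends $\tilde{y}\mapsto y$. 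All constructions are compatible with complex conjugation, so establishing quasi-isomorphism over $\C$ will suffice for the real statement.

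For $\iota$, I first show that the Reeb-invariant subcomplex $\{\alpha\in A^{\ast}(M):\mathcal{L}_{\xi}\alpha=0\}\hookrightarrow A^{\ast}(M)$ is a quasi-isomorphism. This uses the Killing property of $\xi$: in the quasi-regular case by $S^{1}$-averaging, and in general via a harmonic argument or a Rukimbira-style approximation by quasi-regular Sasakian structures. Within the invariant subcomplex, the pointwise splitting $\alpha=\alpha_{0}+\eta\wedge\alpha_{1}$ with $\iota_{\xi}\alpha_{i}=0$ automatically yields basic $\alpha_{i}$, because $\mathcal{L}_{\xi}\alpha=0$ combined with $\iota_{\xi}\alpha_{i}=0$ forces $\iota_{\xi}d\alpha_{i}=0$. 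The image of $\iota$ is thus exactly the invariant subcomplex, and $\iota$ is a quasi-isomorphism.

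For $\pi$, I adapt the Deligne-Griffiths-Morgan-Sullivan argument using Proposition \ref{del}. Let $\mathcal{K}^{\ast}:=(\ker\partial_{B})\otimes\bigwedge\langle\tilde{y}\rangle\subset\mathcal{I}^{\ast}$, and project onto $H_{B}^{\ast}(M,\C)\otimes\bigwedge\langle y\rangle$ via $\ker\partial_{B}\twoheadrightarrow\ker\partial_{B}/\mathrm{im}\,\partial_{B}$. Since $d\eta$ is closed, basic, and of type $(1,1)$, it lies in $\ker\partial_{B}$ with basic cohomology class $[d\eta]_{B}$, so both maps intertwine the $\tilde{y}$-differential. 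Proposition \ref{del} implies that both the inclusion $\mathcal{K}^{\ast}\hookrightarrow\mathcal{I}^{\ast}$ and the surjection $\mathcal{K}^{\ast}\twoheadrightarrow H_{B}^{\ast}(M,\C)\otimes\bigwedge\langle y\rangle$ induce isomorphisms on $d$-cohomology of the basic factor, and a spectral sequence filtered by $\tilde{y}$-degree propagates this to the full tensor product.

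The main obstacle is the first step: reducing arbitrary forms to Reeb-invariant forms is geometric input sitting outside the pure DGA formalism, especially in the irregular case where one cannot directly average against a compact group. The second step is, by contrast, a straightforward transversal analogue of the K\"ahler-formality argument, with the contact class $[d\eta]_{B}$ absorbed by the adjoined generator $y$.
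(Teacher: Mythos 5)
The paper does not actually prove this statement --- it is quoted verbatim from Tievsky's thesis \cite[Section 4.3]{Ti} with no argument given --- so the only meaningful comparison is with the cited source, and your zig-zag is essentially Tievsky's: pass to Reeb-invariant forms (equivalently $A_B^{\ast}\oplus\eta\wedge A_B^{\ast}$), then collapse the basic factor to basic cohomology via the transverse $\partial_{B}\bar\partial_{B}$-lemma, carrying the Hirsch extension $d\tilde y=d\eta$ along. The outline is sound; three points need tightening. First, the two-arrow diagram you draw at the top does not literally exist: the projection $A_{B}^{\ast}(M)\otimes\C\to H_{B}^{\ast}(M,\C)$ is not a DGA map, so the zig-zag must be lengthened to
\[
A^{\ast}(M)\otimes\C\leftarrow \mathcal I^{\ast}\hookleftarrow ({\rm Ker}\,\partial_{B})\otimes\textstyle\bigwedge\langle\tilde y\rangle\rightarrow H_{B}^{\ast}(M,\C)\otimes\textstyle\bigwedge\langle y\rangle,
\]
which is fine since the paper's notion of quasi-isomorphic allows finite zig-zags, but you should say so explicitly and verify (via Proposition~\ref{del}, as in \cite{DGMS}) that both of the last two arrows are quasi-isomorphisms before invoking the Hirsch-extension/spectral-sequence comparison. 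Second, for the invariant-forms step the clean general argument is to average over the closure of the Reeb flow in the isometry group of the Sasakian metric, which is a compact torus because $\xi$ is Killing; this handles the irregular case uniformly and makes the appeal to quasi-regular approximation unnecessary. Third, $A^{\ast}$ is a real DGA while your middle terms ${\rm Ker}\,\partial_{B}$ are not defined over $\R$; either invoke the descent of quasi-isomorphism type under field extension as in \cite{DGMS}, or replace ${\rm Ker}\,\partial_{B}$ by the kernel of the real operator $d_{B}^{c}$ and use the equivalent $d_{B}d_{B}^{c}$-lemma to stay over $\R$ throughout.
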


Define the basic Bott-Chern cohomology $H^{\ast,\ast}_{B}(M)$ as
\[H^{\ast,\ast}_{B}(M)=\frac{{\rm Ker}\,\partial_{B}\cap {\rm Ker}\,\bar\partial_{B}}{{\rm Im}\,\partial_{B}\bar\partial_{B}}.
\]
Then we have $\overline{H^{p,q}_{B}(M)}=H^{q,p}_{B}(M)$ and 
the natural map
\[{\rm Tot} ^{\ast}H^{\ast,\ast}_{B}(M)\to H^{\ast}_{B}(M,\C).
\]
By  $\partial_{B}\bar\partial_{B}$-Lemma, this natural map is an isomorphism (see \cite[Remark 5.16]{DGMS}).
Hence we have the $\R$-Hodge structure of weight $r$ on $H^{r}_{B}(M,\R)$ as
\[H^{r}_{B}(M,\C)=\bigoplus_{p+q=r} H^{p,q}_{B}(M).
\]
We consider the
bigrading $A^{\ast}_{\C}=\bigoplus A^{\ast}_{p,q}$ such that
\[A^{\ast}_{p,q}=H^{p,q}_{B}(M)\oplus H^{p-1,q-1}_{B}(M)\wedge y.
\]

\begin{definition}
\begin{enumerate}
\item
We define the increasing filtration  $W_{\ast}$ on the DGA $A^{\ast}$
such that 
$W_{-1}A^{\ast}=0$,
$W_{0}A^{\ast}=H^{\ast}_{B}(M,\R)
$
and
$W_{1}A^{\ast}=A^{\ast}$.

\item
We define the decreasing filtration  $F^{\ast}$ on the DGA $A^{\ast}_{\C}$ such that
\[F^{k} (A^{\ast}_{\C})=\bigoplus_{p\ge k} A^{\ast}_{p,q}.
\]

\end{enumerate}
\end{definition}

\begin{theorem}\label{SASAH}
The pair $(A^{\ast},W_{\ast})$, $(A^{\ast}_{\C},W_{\ast},F^{\ast})$ and ${\rm id}:A^{\ast}_{\C}\to A^{\ast}_{\C}$ is a $\R$-mixed-Hodge diagram.
\end{theorem}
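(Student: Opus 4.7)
The plan is to verify the three axioms of an $\R$-mixed-Hodge diagram directly, exploiting that $\phi={\rm id}$ and that the weight filtration $W$ has only one non-trivial jump. Axiom (1) is automatic because the identity induces the identity on the $W$-spectral sequence. To compute that spectral sequence, note that from $W_0 A^{\ast}=H^{\ast}_B(M,\R)$ and $W_1 A^{\ast}=A^{\ast}$ the graded pieces are $Gr^W_0 A^{\ast}_{\C}=H^{\ast}_B(M,\C)$ and $Gr^W_1 A^{\ast}_{\C}\cong H^{\ast}_B(M,\C)\wedge y$. On the first piece the differential is zero because $H^{\ast}_B$ is equipped with the trivial differential, and on the second piece $d(\beta\wedge y)=\pm\beta\wedge[d\eta]_B$ lies in $W_0$ and hence vanishes modulo $W_0$. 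Therefore $d_0\equiv 0$ on $_W E_0$, so $_W E_1=Gr^W A^{\ast}_{\C}$, and axiom (2) is immediate since the zero map is strictly compatible with every filtration.

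The substance of the argument is axiom (3): the filtration induced by $F$ on each $_W E_1^{p,q}$ must give an $\R$-Hodge structure of weight $q$, and only $p=0$ and $p=-1$ produce non-zero terms. For $p=0$, the piece $_W E_1^{0,q}=H^q_B(M,\C)$ inherits $F^k=\bigoplus_{u\ge k}H^{u,q-u}_B(M)$, which is exactly the Hodge filtration of the pure weight-$q$ Hodge structure produced by the transverse Dolbeault decomposition together with Proposition~\ref{del}. For $p=-1$, the piece $_W E_1^{-1,q}\cong H^{q-2}_B(M,\C)\wedge y$; the shift in the definition $A^{\ast}_{u,v}\supset H^{u-1,v-1}_B(M)\wedge y$ effectively assigns $y$ the bidegree $(1,1)$, promoting the natural weight-$(q-2)$ Hodge structure on $H^{q-2}_B(M,\C)$ to a weight-$q$ structure on $H^{q-2}_B(M,\C)\wedge y$, with $(u,v)$-component $H^{u-1,v-1}_B(M)\wedge y$ for $u+v=q$, and conjugation swapping $(u,v)\leftrightarrow(v,u)$ because $\overline{H^{u-1,v-1}_B}=H^{v-1,u-1}_B$.

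The only step demanding real care is the bookkeeping on the $y$-piece, that is, tracking the $(1,1)$-shift so that the $F$-induced filtration on $Gr^W_1$ actually reproduces the weight-$q$ Hodge filtration with the correct real structure. All the genuinely geometric input from the Sasakian structure has already been absorbed into Proposition~\ref{del} (the basic $\partial_B\bar\partial_B$-lemma) and Theorem~\ref{for}; beyond that, everything is a formal manipulation of the two-step filtration $W$ and the explicit bigrading.
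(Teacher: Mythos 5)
Your proposal is correct and follows essentially the same route as the paper: compute that $d_{0}$ vanishes on $\,_{W}E_{0}$ (so $\,_{W}E_{1}^{0,q}\cong H^{q}_{B}(M,\C)$ and $\,_{W}E_{1}^{-1,q}\cong H^{q-2}_{B}(M,\C)\wedge y$), and then read off the weight-$q$ Hodge structures from the basic Hodge decomposition $H^{r}_{B}(M,\C)=\bigoplus_{p+q=r}H^{p,q}_{B}(M)$ with the $(1,1)$-shift coming from $y$. You merely spell out the verification of axioms (1) and (2) that the paper dismisses as immediate.
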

\begin{proof}
We can easily check that the differential $d_{0}$ on  $\,_{W}E_{0}^{p,q}(A^{\ast}_{\C})$ is zero and we obtain
 \[\,_{W}E_{1}^{0,q}(A^{\ast}_{\C})\cong H^{q}_{B}(M,\C),\]
 \[\,_{W}E_{1}^{-1,q}(A^{\ast}_{\C})\cong H^{q-2}_{B}(M,\C)\wedge y\]
and other $\,_{W}E_{1}^{p,q}(A^{\ast}_{\C})$ is trivial.
Hence, by $H^{r}_{B}(M,\C)=\bigoplus_{p+q=r} H^{p,q}_{B}(M)$ and $\overline{H^{p,q}_{B}(M)}=H^{q,p}_{B}(M)$,
the filtration $F$ induces $\R$-Hodge structures of weight $q$ on  $\,_{W}E_{1}^{0,q}(A^{\ast}_{\C})$ and $\,_{W}E_{1}^{-1,q}(A^{\ast}_{\C})$
such that $\,_{W}E_{1}^{-1,q}(A^{\ast}_{\C})=\bigoplus_{s+t=q} V_{s,t}$ with $V_{s,t}=H^{s-1,t-1}_{B}(M)\wedge y$.
Hence the theorem follows.
\end{proof}

Hence, by Theorem \ref{MMM11}, we obtain the following theorem.

\begin{theorem}\label{SasMMM11}
Let $M$ be a $2n+1$-dimensional compact Sasakian manifold and $A^{\ast}_{\C}(M)$ the de Rham complex of $M$.
Consider the minimal model $\mathcal M$ (resp $1$-minimal model) of $A^{\ast}_{\C}(M)$ with a quasi-isomorphism  (resp. $1$-quasi-isomorphism) $\phi:\mathcal M\to A_{\C}^{\ast}(M)$.
Then we have:
\begin{enumerate}
\item The real de Rham cohomology $H^{\ast}(M,\R)$ admits a 
$\R$-mixed-Hodge structure.

\item 
$\mathcal M^{\ast}$ admits a bigrading 
\[\mathcal M^{\ast}=\bigoplus_{p,q\ge0}\mathcal M^{\ast}_{p,q}
\]
such that $\mathcal M^{\ast}_{0,0}=\mathcal M^{0}=\C$ and 
the product and the differential are of type $(0,0)$.

\item
Consider the bigrading $H^{\ast}(M,\C)=\bigoplus V_{p,q}$ for the  $\R$-mixed-Hodge structure.
Then the induced map $\phi^{\ast}:H^{\ast}(\mathcal M^{\ast})\to H^{\ast}(M,\C)$ 
sends $H^{\ast}(\mathcal M^{\ast}_{p,q})$ to $V_{p,q}$.

\end{enumerate}
\end{theorem}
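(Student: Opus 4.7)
The plan is to obtain Theorem \ref{SasMMM11} as a direct application of the mixed Hodge diagram machinery developed in the preceding sections, specialized to the diagram constructed in Theorem \ref{SASAH}. All the substantive work has already been done: Theorem \ref{for} identifies the complicated DGA $A^{\ast}(M)$ with the simple model $A^{\ast}=H^{\ast}_{B}(M,\R)\otimes \bigwedge \langle y\rangle$ up to quasi-isomorphism, Theorem \ref{SASAH} equips $A^{\ast}$ with the structure of an $\R$-mixed-Hodge diagram, and Theorems \ref{midimi} and \ref{MMM11} take such a diagram as input and output precisely the conclusions (1)--(3).

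For part (1), I would first invoke Theorem \ref{SASAH} to identify $((A^{\ast},W_{\ast}),(A^{\ast}_{\C},W_{\ast},F^{\ast}),\mathrm{id})$ as an $\R$-mixed-Hodge diagram. Theorem \ref{midimi} then produces an $\R$-mixed-Hodge structure on $H^{r}(A^{\ast})$ via the shifted weight filtration $W^{\prime}_{i}H^{r}(A^{\ast})=W_{i-r}(H^{r}(A^{\ast}))$ together with $F^{\ast}$. The quasi-isomorphism from Theorem \ref{for} gives a canonical isomorphism $H^{\ast}(A^{\ast})\cong H^{\ast}(M,\R)$, which transports the mixed Hodge structure to $H^{\ast}(M,\R)$.

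For parts (2) and (3), I would apply Theorem \ref{MMM11} to the same diagram. This produces a bigrading $\mathcal M^{\ast}=\bigoplus_{p,q\ge 0}\mathcal M^{\ast}_{p,q}$ on the minimal (resp.\ $1$-minimal) model of $A^{\ast}_{\C}$ with the required properties ($\mathcal M^{\ast}_{0,0}=\C$, product and differential of type $(0,0)$, and compatibility of the induced cohomology map with the bigrading $V_{p,q}$). The passage from the minimal model of $A^{\ast}_{\C}$ to the minimal model of $A^{\ast}_{\C}(M)$ is handled by the uniqueness of minimal models under quasi-isomorphism: since $A^{\ast}$ and $A^{\ast}(M)$ are quasi-isomorphic by Theorem \ref{for} (and complexification preserves this), both DGAs share the same minimal model $\mathcal M^{\ast}$, and one composes the quasi-isomorphism $\mathcal M^{\ast}\to A^{\ast}_{\C}$ with the chain of quasi-isomorphisms between $A^{\ast}_{\C}$ and $A^{\ast}_{\C}(M)$ to obtain the map $\phi:\mathcal M^{\ast}\to A^{\ast}_{\C}(M)$.

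Since all the technical content is packaged in the earlier theorems, there is no genuine obstacle beyond keeping track of how the isomorphism $H^{\ast}(A^{\ast})\cong H^{\ast}(M,\C)$ intertwines the bigradings. The one point that deserves explicit verification is that the induced map $\phi^{\ast}$ on cohomology sends $H^{\ast}(\mathcal M^{\ast}_{p,q})$ into the summand $V_{p,q}$ of $H^{\ast}(M,\C)$ rather than merely into $V_{p,q}$ of $H^{\ast}(A^{\ast}_{\C})$; this is immediate because the transport of the mixed Hodge structure along the quasi-isomorphism of Theorem \ref{for} is by construction a morphism of mixed Hodge structures on cohomology, so the bigrading decompositions correspond summand-by-summand.
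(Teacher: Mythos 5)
Your proposal is correct and follows essentially the same route as the paper, which simply invokes Theorem \ref{SASAH} together with Theorems \ref{midimi}, \ref{MMM11} and \ref{for} (plus uniqueness of minimal models under quasi-isomorphism) and states the result in one line. Your additional care about transporting the bigrading along the quasi-isomorphism of Theorem \ref{for} is a point the paper leaves implicit, but it does not change the argument.
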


 By this theorem, we have a canonical mixed Hodge structure on the cohomology $H^{\ast}(M,\R)$.
In more detail the mixed Hodge structure  is given by the bigrading $A^{\ast}_{\C}=\bigoplus A^{\ast}_{p,q}$ such that
\[A^{\ast}_{p,q}=H^{p,q}_{B}(M)\oplus H^{p-1,q-1}_{B}(M)\wedge y.
\]
We can easily show the following proposition.
\begin{proposition}\label{Hods}
Consider the bigrading $H^{\ast}(M,\C)=\bigoplus V_{p,q}$ as in  Proposition \ref{BIGG}.
\begin{enumerate}
\item $H^{1}(M,\C)=V_{1,0}\oplus V_{0,1}$.

\item 
$V_{n+1,n+1}=H^{2n+1}(M,\C)$.

\item 
If $n\ge 2$, then $H^{2}(M,\C)=V_{2,0}\oplus V_{1,1}\oplus V_{0,2}$.
\end{enumerate}
\end{proposition}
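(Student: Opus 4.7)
My plan is to compute $H^{\ast}(A^{\ast}_{\C})$ explicitly, using the concrete model $A^{\ast} = H^{\ast}_{B}(M,\R)\otimes\bigwedge\langle y\rangle$ with $dy = [d\eta]_{B}$ from Theorem \ref{for}, so that $H^{\ast}(M,\C)\cong H^{\ast}(A^{\ast}_{\C})$, and then read off each $V_{p,q}$ directly from the bigrading $A^{\ast}_{p,q}=H^{p,q}_{B}\oplus H^{p-1,q-1}_{B}\wedge y$. Since $d$ is zero on $H^{\ast}_{B}$ and equals $\pm$ cup with $[d\eta]_{B}$ on the $\wedge y$-part, the cohomology in degree $r$ splits as
\[
H^{r}(A^{\ast}_{\C})\;=\;\frac{H^{r}_{B}(M,\C)}{[d\eta]_{B}\wedge H^{r-2}_{B}(M,\C)}\;\oplus\;\ker\!\Bigl(H^{r-1}_{B}(M,\C)\xrightarrow{[d\eta]_{B}\wedge\,}H^{r+1}_{B}(M,\C)\Bigr)\wedge y.
\]
Because $[d\eta]_{B}\in H^{1,1}_{B}$, the differential has bidegree $(0,0)$ with respect to the $A^{\ast}_{p,q}$-bigrading, so this decomposition is itself bigraded.

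Next I would check that the Deligne bigrading $V_{p,q}$ of Proposition \ref{BIGG} applied to the mixed Hodge structure from Theorems \ref{SASAH} and \ref{midimi} coincides with the bigrading inherited from $A^{\ast}_{p,q}$. Since $W_{0}A^{\ast}=H^{\ast}_{B}$ and $W_{1}A^{\ast}=A^{\ast}$, the weight shift $W^{\prime}_{i}H^{r}=W_{i-r}H^{r}$ places the basic summand of $H^{r}(A^{\ast})$ in weight $r$ and the $\wedge y$-summand in weight $r+1$; combined with $F^{k}A^{\ast}_{\C}=\bigoplus_{p\ge k}A^{\ast}_{p,q}$, this identifies $V_{p,q}$ for $p+q=r$ with $H^{p,q}_{B}/[d\eta]_{B}\wedge H^{p-1,q-1}_{B}$ and $V_{p,q}$ for $p+q=r+1$ with $\ker([d\eta]_{B}\wedge\colon H^{p-1,q-1}_{B}\to H^{p,q}_{B})\wedge y$.

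With this identification the three claims are short verifications. For (1), take $r=1$: the basic summand contributes $V_{1,0}\oplus V_{0,1}=H^{1,0}_{B}\oplus H^{0,1}_{B}$, while the weight-$2$ pieces require $\ker([d\eta]_{B}\wedge\colon H^{0}_{B}\to H^{2}_{B})$, which vanishes because $[d\eta]_{B}\neq 0$ in $H^{2}_{B}$ (it is the transverse K\"ahler class). For (2), take $r=2n+1$: the transverse complex dimension is $n$, so $H^{2n+1}_{B}=H^{2n+2}_{B}=0$, and the only surviving piece is $H^{n,n}_{B}\wedge y$ (the top basic Hodge piece is purely of type $(n,n)$), which lies in bidegree $(n+1,n+1)$. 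For (3), take $r=2$ and $n\ge 2$: the basic summand gives $V_{2,0}\oplus V_{1,1}\oplus V_{0,2}=H^{2,0}_{B}\oplus\bigl(H^{1,1}_{B}/\langle[d\eta]_{B}\rangle\bigr)\oplus H^{0,2}_{B}$, and the weight-$3$ pieces require $\ker([d\eta]_{B}\wedge\colon H^{1}_{B}\to H^{3}_{B})=0$.

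The main obstacle, beyond routine bookkeeping, is this last vanishing in (3). I would invoke the Hard Lefschetz property for basic cohomology of a compact Sasakian manifold (a consequence of the transverse K\"ahler structure together with the $\partial_{B}\bar\partial_{B}$-lemma of Proposition \ref{del}): the map $[d\eta]_{B}^{n-1}\colon H^{1}_{B}\to H^{2n-1}_{B}$ is an isomorphism, which for $n\ge 2$ forces $[d\eta]_{B}\wedge\colon H^{1}_{B}\to H^{3}_{B}$ to be injective. The identification of the $A^{\ast}_{p,q}$-bigrading with the Deligne bigrading is conceptually transparent once one observes that $W_{\ast}$ and $F^{\ast}$ are themselves sum-filtrations of the explicit $A^{\ast}_{p,q}$-decomposition, so Proposition \ref{BIGG} simply recovers that decomposition on cohomology.
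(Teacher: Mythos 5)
Your proposal is correct and follows essentially the same route as the paper: the paper likewise reads everything off the explicit bigraded model $A^{\ast}_{p,q}=H^{p,q}_{B}\oplus H^{p-1,q-1}_{B}\wedge y$, treats (1) and (2) as immediate, and proves (3) from the injectivity of $[d\eta]_{B}\wedge\colon H^{1}_{B}\to H^{3}_{B}$ for $n\ge 2$ (cited from Boyer--Galicki, which is the same transverse Hard Lefschetz fact you invoke). Your write-up merely makes explicit the cohomology computation and the identification of the Deligne bigrading that the paper leaves implicit.
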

\begin{proof}
The first and second assertions are easy.

Suppose $n\ge 2$.
Then it is known that the map $[d\eta]\wedge :H^{1}_{B}(M,\C)\to H^{3}_{B}(M,\C)$ is injective (see \cite[Section 7.2]{BoG}).
Hence we have ${\rm Ker}\, d_{\vert A_{\C}^{2}} \subset H^{2}_{B}(M,\C)$.
Thus the third assertion follows. 
\end{proof}

\begin{example}
Let $N=H_{2n+1}$ and $\Gamma$ be a lattice in $N$.
Consider the DGA $\bigwedge \frak n^{\ast}=\bigwedge \langle x_{1},\dots ,x_{2n}, y\rangle$ as in Example \ref{HeiHei}.
Then the nilmanifold $\Gamma\backslash N$ is a Sasakian manifold with the left-invariant contact structure $y$.
In this case, the foliation induced by  Reeb vector field is a principal $S^{1}$ bundle over the $n$-dimensional Abelian variety $T$ 
associated with the K\"ahler  class on $T$.
We have $H^{\ast}_{B}(M,\R) \cong H^{\ast}(T,\R)\cong \bigwedge \langle x_{1},\dots ,x_{2n}\rangle$.
Thus the DGA $A^{\ast}=H^{\ast}_{B}(M,\R)\otimes \bigwedge \langle y \rangle$ is identified with $\bigwedge \frak n^{\ast}$ and hence it is the minimal model of the de Rham complex $A^{\ast}(\Gamma\backslash N)$.
For ${\mathcal V}= \langle x_{1},\dots ,x_{2n}\rangle$,
take ${\mathcal V}\otimes \C=\mathcal V^{1,0}\oplus \mathcal V^{0,1}$ associated with the complex structure on $T$.
Take  the bigrading on $\bigwedge \frak n^{\ast}_{\C}=\bigwedge (\mathcal V^{1,0}\oplus \mathcal V^{0,1})\otimes \bigwedge \langle y \rangle$ so that $y$ is considered as an element of type $(1,1)$.
Then we can check that this bigrading   is  a bigrading    as in Theorem \ref{SasMMM11} (see the explanation after Theorem \ref{MMM11}).
As in Proposition \ref{Hods}, if  $n\ge 2$ we can easily check that $H^{2}(\Gamma\backslash N,\C)=V_{2,0}\oplus V_{1,1}\oplus V_{0,2}$.
But if $n=1$, we have  $H^{2}(\Gamma\backslash N,\C)=V_{2,1}\oplus V_{1,2}$
\end{example}

\section{$1$-formality of Sasakian manifolds}


\begin{theorem}
Let $M$ be a compact $2n+1$-dimensional Sasakian manifold with $n\ge 2$.
Then the DGA $A^{\ast}_{\C}(M)$ is $1$-formal equivalently the Malcev Lie algebra of $\pi_{1}(M)$ admits a quadratic presentation.
\end{theorem}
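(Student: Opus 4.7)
By Proposition \ref{quadr}, the plan is to verify that the natural map $H^{2}(\mathcal{M}^{\ast}(1))\to H^{2}(\mathcal{M}^{\ast})$ is surjective, where $\mathcal{M}^{\ast}$ denotes the $1$-minimal model of $A^{\ast}_{\C}(M)$ and $\mathcal{M}^{\ast}(1)\subset \mathcal{M}^{\ast}(2)\subset\cdots$ is the canonical filtration from Section \ref{1msul}. The approach is to exploit the bigrading on $\mathcal{M}^{\ast}$ given by Theorem \ref{SasMMM11}, combined with the weight constraints of Proposition \ref{Hods}.

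First I would observe that $H^{2}(\mathcal{M}^{\ast})$ is pure of total weight $2$: the map $\phi^{\ast}\colon H^{2}(\mathcal{M}^{\ast})\hookrightarrow H^{2}(M,\C)$ is an injection sending $H^{2}(\mathcal{M}^{\ast}_{p,q})$ into $V_{p,q}$, and Proposition \ref{Hods}(3) (which is where the hypothesis $n\geq 2$ enters) forces $V_{p,q}=0$ whenever $p+q\neq 2$ in $H^{2}$. In the same way, Proposition \ref{Hods}(1) makes $\mathcal{V}_{1}\cong H^{1}(M,\C)$ pure of weight $1$, so $\mathcal{V}_{1}\wedge\mathcal{V}_{1}$ sits in pure weight $2$.

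The core step is then an inductive analysis of the weights of the successive generators $\mathcal{V}_{k}$ produced by Morgan's construction (described after Theorem \ref{MMM11}). Since $d\colon\mathcal{V}_{2}\to \mathcal{V}_{1}\wedge\mathcal{V}_{1}$ is a bigraded isomorphism onto $\operatorname{Ker}\phi_{1}$, the space $\mathcal{V}_{2}$ is pure of weight $2$. I would then prove by induction that $\mathcal{V}_{n+1}$ has weights $\geq 3$ for every $n\geq 2$: in $\mathcal{M}^{\ast}(n)=\bigwedge(\mathcal{V}_{1}\oplus\cdots\oplus\mathcal{V}_{n})$, any product involving a factor from $\mathcal{V}_{k}$ with $k\geq 2$ has total weight $\geq 3$, so the only weight-$2$ degree-$2$ elements come from $\mathcal{V}_{1}\wedge\mathcal{V}_{1}$ and the only weight-$2$ coboundaries come from $d(\mathcal{V}_{2})=\operatorname{Ker}\phi_{1}$. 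Hence the weight-$2$ part of $H^{2}(\mathcal{M}^{\ast}(n))$ equals $\operatorname{im}\phi_{1}$, on which $\phi_{n}$ acts as the tautological inclusion, and therefore $\operatorname{Ker}\phi_{n}\cong\mathcal{V}_{n+1}$ contains no weight-$2$ component.

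Together these steps show that every weight-$2$ element of $\mathcal{M}^{2}$ lies in $\mathcal{M}^{\ast}(1)^{2}$, and since $H^{2}(\mathcal{M}^{\ast})$ is pure of weight $2$, every class in $H^{2}(\mathcal{M}^{\ast})$ admits a representative in $\mathcal{M}^{\ast}(1)$; the desired surjectivity follows and Proposition \ref{quadr} concludes. The main technical obstacle is the inductive weight computation: one must carefully use that Morgan's construction keeps every $\phi_{n}$ strictly bigraded so that its kernel splits into weight-homogeneous pieces, and that no weight-$2$ classes can be introduced by the higher-stage generators through products or differentials. The hypothesis $n\geq 2$ is invoked exactly once, via Proposition \ref{Hods}(3); for $n=1$ the Heisenberg example exhibits a weight-$3$ component in $H^{2}$ that obstructs precisely this step.
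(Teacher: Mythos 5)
Your proposal is correct and follows essentially the same route as the paper: both use the bigrading of Theorem \ref{SasMMM11} together with Proposition \ref{Hods} to show that $\mathcal V_{1}$ is pure of weight $1$, $\mathcal V_{2}$ pure of weight $2$, and all higher $\mathcal V_{k}$ of weight $\geq 3$, while $H^{2}(\mathcal M^{\ast})$ is pure of weight $2$, so that $H^{2}(\mathcal M^{\ast}(1))\to H^{2}(\mathcal M^{\ast})$ is surjective and Proposition \ref{quadr} applies. Your inductive justification that $\operatorname{Ker}\phi_{n}$ has no weight-$2$ component is in fact spelled out more carefully than in the paper's rather terse statement that $d\mathcal V_{k}$ lands in products of total weight $\geq 3$.
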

\begin{proof}
Consider the $1$-minimal model $\mathcal M^{\ast}$  of the de Rham complex $A^{\ast}_{\C}(M)$ with a $1$-quasi-isomorphism  $\phi:\mathcal M^{\ast}\to A^{\ast}_{\C}(M)$.
We take a bigrading 
\[\mathcal M^{\ast}=\bigoplus_{p,q\ge0}\mathcal M^{\ast}_{p,q}
\]
as Theorem \ref{SasMMM11}.
Since the induced injection  $\phi^{2}:H^{\ast}(\mathcal M^{\ast})\to H^{2}(M,\C)$ 
sends $H^{2}(\mathcal M^{\ast}_{p,q})$ to $V_{p,q}$,
by Proposition \ref{Hods}, we have 
$H^{2}(\mathcal M^{\ast})=H^{2}(\mathcal M^{\ast}_{2,0}\oplus \mathcal M^{\ast}_{1,1}\oplus \mathcal M^{\ast}_{0,2})$.
We also consider the canonical sequence of DGAs
\[{\mathcal M}^{\ast}(1)\subset {\mathcal M}^{\ast}(2)\subset \dots .
\]
as Section \ref{1msul}.
By Proposition \ref{Hods}, ${\mathcal V}_{1}$ is spanned by elements of  type $(1,0)$ and $(0,1)$.
Since we take $d{\mathcal V}_{2}\subset {\mathcal V}_{1}\wedge {\mathcal V}_{1}$,
we can see that ${\mathcal V}_{2}$ is spanned by elements of  type $(2,0)$, $(1,1)$ and $(0,2)$.
For $n\ge 3$,  since we take $d{\mathcal V}_{n}\subset \bigoplus_{i,j<0,\, i+j>2}  {\mathcal V}_{i}\wedge {\mathcal V}_{j}$,
  ${\mathcal V}_{n}$  is spanned by  elements of type $(p,q)$ with $p+q\ge 3$.
Hence we have 
\[H^{2}(\mathcal M^{\ast})=H^{2}(\mathcal M^{\ast}_{2,0}\oplus \mathcal M^{\ast}_{1,1}\oplus \mathcal M^{\ast}_{0,2})= {\mathcal V}_{1}\wedge {\mathcal V}_{1}/d({\mathcal V}_{2}) .\]
Hence the theorem follows from Proposition \ref{quadr}.

\end{proof}

Let $H_{3}$ be the $3$-dimensional real Heisenberg group and $\Gamma$ a lattice in $H_{3}$.
Then the nilmanifold $\Gamma\backslash H_{3}$ is Sasakian.
In \cite{Ti}, Tievsky observed that for any compact even dimensional simply connected manifold $M$, the product $\Gamma\backslash H_{3}\times M$ is not Sasakian.
We can extend this observation.

\begin{proposition}
For any compact even dimensional manifold $M$ (not necessarily simply connected), 
the product $\Gamma\backslash H_{3}\times M$ is not Sasakian.
\end{proposition}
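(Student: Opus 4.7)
The plan is to suppose that $\Gamma\backslash H_{3}\times M$ admits a Sasakian structure and derive a contradiction from Theorem \ref{int}. Implicitly $M$ must have positive dimension (if $\dim M=0$ and $M$ is a point, the product is just $\Gamma\backslash H_{3}$, which is Sasakian), so the total dimension of $\Gamma\backslash H_{3}\times M$ is an odd integer $\geq 5$, of the form $2n+1$ with $n\geq 2$. Theorem \ref{int} then forces the Malcev Lie algebra of $\pi_{1}(\Gamma\backslash H_{3}\times M)$ to admit a quadratic presentation.

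The strategy is to show that such a presentation is incompatible with the presence of the $\Gamma$ factor. Since $\pi_{1}(\Gamma\backslash H_{3}\times M)=\Gamma\times\pi_{1}(M)$, and the lower central series of a direct product of groups is the termwise product, Malcev completion commutes with direct products. Hence the Malcev Lie algebra splits as a direct sum $\mathfrak{n}(\Gamma)\oplus \mathfrak{n}(\pi_{1}(M))$, and $\mathfrak{n}(\Gamma)$ is the $3$-dimensional Heisenberg Lie algebra by Example \ref{HeiHei}.

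The key algebraic input I would use is that a direct summand of a quadratically presented pro-nilpotent Lie algebra is itself quadratically presented. Concretely, if $\mathfrak{g}\oplus\mathfrak{h}=F(V_{g}\oplus V_{h})/\langle R\rangle$ with $R$ generated in degree two, the projection of free Lie algebras $F(V_{g}\oplus V_{h})\to F(V_{g})$ (with kernel generated by $V_{h}$) sends $R$ into $[V_{g},V_{g}]$, and so presents $\mathfrak{g}$ quadratically. Granting this, $\mathfrak{n}(\Gamma)$ would admit a quadratic presentation. But Example \ref{HeiHei} shows explicitly that for $n=1$ it does not: via Proposition \ref{quadr}, the class of $dy=x_{1}\wedge x_{2}$ in $H^{2}(\mathcal M^{*})$ is not in the image of $H^{2}(\mathcal M^{*}(1))\to H^{2}(\mathcal M^{*})$.

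The main obstacle is the algebraic lemma about direct summands; I expect the cleanest way to phrase it is at the level of $1$-minimal models, using the K\"unneth-type identification $\mathcal M^{*}_{A_{1}\otimes A_{2}}\cong \mathcal M^{*}_{A_{1}}\otimes \mathcal M^{*}_{A_{2}}$ (which respects the canonical filtration $\mathcal M^{*}(k)$ from Section \ref{1msul}). Under this identification, the surjectivity criterion for $1$-formality in Proposition \ref{quadr}(3) for the tensor product decomposes as the conjunction of the surjectivity conditions for the individual factors, so $1$-formality of $A^{*}_{\C}(\Gamma\backslash H_{3}\times M)$ yields $1$-formality of $A^{*}_{\C}(\Gamma\backslash H_{3})$ — contradicting the explicit computation for $n=1$ in Example \ref{HeiHei}.
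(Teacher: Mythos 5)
Your proof is correct in outline but follows a genuinely different route from the paper. The paper's argument is shorter and more concrete: $\Gamma\backslash H_{3}$ carries a non-trivial Massey triple product on $H^{1}$ (coming from $dy=x_{1}\wedge x_{2}$), this Massey product survives to $\Gamma\backslash H_{3}\times M$ via K\"unneth, and a non-trivial Massey product on $H^{1}$ obstructs $1$-formality, contradicting the $1$-formality theorem for compact Sasakian manifolds of dimension $2n+1$, $n\ge 2$. You instead work on the group/Lie-algebra side: $\pi_{1}$ of the product splits as $\Gamma\times\pi_{1}(M)$, Malcev completion respects the splitting, and quadratic presentability must pass to the summand $\mathfrak{n}(\Gamma)=\mathfrak{h}_{3}$, which Example \ref{HeiHei} rules out. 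Both arguments hinge on the same main theorem (and both need $\dim M>0$, which you correctly make explicit and the paper leaves implicit). One caution on your key lemma: the free-Lie-algebra projection argument as you first state it only produces a surjection $F(V_{g})/\langle \pi(R)\rangle\twoheadrightarrow\mathfrak{g}$, not an isomorphism, so on its own it does not establish that the summand is quadratically presented. Your fallback via $1$-minimal models does close this gap --- the $1$-minimal model of $A_{1}\otimes A_{2}$ is $\mathcal M_{1}\otimes\mathcal M_{2}$ with $(\mathcal M_{1}\otimes\mathcal M_{2})(k)=\mathcal M_{1}(k)\otimes\mathcal M_{2}(k)$, and since $\Lambda^{2}\mathcal V_{1}^{(1)}$ is the only part of $H^{2}(\mathcal M(1))$ hitting the K\"unneth factor $H^{2}(\mathcal M_{1})\otimes H^{0}(\mathcal M_{2})$, surjectivity in Proposition \ref{quadr}(3) for the product forces it for each factor --- but this K\"unneth compatibility is exactly the extra work the paper's Massey-product argument avoids. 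The trade-off: your approach yields a reusable structural fact ($1$-formality passes to factors of a product), while the paper's is more elementary and immediate.
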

\begin{proof}
 The nilmanifold $\Gamma\backslash H_{3}$ has a non-trivial Massey triple product on the first cohomology and it is an obstruction of $1$-formality.
The product $\Gamma\backslash H_{3}\times M$ also has a non-trivial Massey triple product on the first cohomology.
Hence $\Gamma\backslash H_{3}$ is not Sasakian.
\end{proof}

It is known that for a compact contact manifold $X$ and compact surface $S$,
the product $X\times S$ admits a contact structure (see \cite{Bou}, \cite{Bow}).
Hence we can construct many non-Sasakian contact manifolds.
\begin{corollary}
Let $S_{1},\dots, S_{n}$ be compact surfaces.
Then the product $\Gamma\backslash H_{3}\times S_{1}\times \dots \times S_{n}$ is a non-Sasakian contact manifold.
\end{corollary}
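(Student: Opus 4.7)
The plan is to combine two ingredients already at hand: the Bourgeois--Bowden existence result on contact structures on products with surfaces, and the preceding Proposition, which rules out Sasakian structures on $\Gamma\backslash H_{3}\times M$ for any compact even-dimensional $M$.

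First, I would establish the contact part by induction on $n$. The base case is $\Gamma\backslash H_{3}$ itself, which is contact via the left-invariant $1$-form $y$ from Example \ref{HeiHei} (in fact it is already Sasakian). For the inductive step, assuming $X_{k}:=\Gamma\backslash H_{3}\times S_{1}\times\dots\times S_{k}$ is a compact contact manifold, the cited results \cite{Bou}, \cite{Bow} endow $X_{k}\times S_{k+1}$ with a contact structure, and compactness is preserved under finite products, so $X_{k+1}$ is again a compact contact manifold. After $n$ steps one reaches the full product $\Gamma\backslash H_{3}\times S_{1}\times\dots\times S_{n}$.

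To rule out a Sasakian structure, I apply the preceding Proposition with $M=S_{1}\times\dots\times S_{n}$. Each $S_{i}$ is a compact $2$-manifold, so $M$ is a compact manifold of dimension $2n$, which is even; the Proposition has no simple-connectivity hypothesis, so it applies directly and forces $\Gamma\backslash H_{3}\times M$ to be non-Sasakian. The underlying mechanism is that the non-trivial Massey triple product on $H^{1}(\Gamma\backslash H_{3})$ survives in $H^{1}(\Gamma\backslash H_{3}\times M)$ via the K\"unneth isomorphism on first cohomology, obstructing the $1$-formality that any Sasakian manifold of dimension $\ge 5$ would have to possess by Theorem \ref{int}.

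There is no genuine obstacle here; the corollary is dimensional bookkeeping on top of the two inputs, the key observation being that $\dim(S_{1}\times\dots\times S_{n})=2n$ is even. The only substantive geometric content is the black-box contact-product construction of Bourgeois and Bowden, which is invoked without reproving.
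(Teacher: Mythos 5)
Your argument is correct and is essentially the proof the paper intends (the paper leaves it implicit, deriving the corollary directly from the Bourgeois--Bowden contact-product result and the preceding Proposition applied to $M=S_{1}\times\dots\times S_{n}$, exactly as you do). The only caveat is that, like the paper, you take on faith that the non-trivial Massey triple product on $H^{1}(\Gamma\backslash H_{3})$ remains non-trivial modulo the (larger) indeterminacy after pulling back to the product; this is true but is asserted rather than checked in both your write-up and the source.
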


\section{Sasakian nilmanifolds}\label{nini}
First, we show the following proposition inspired by Hain's paper \cite{Hali}.
\begin{proposition}\label{nilll}
Let $\frak n$ be a real $2n+1$-dimensional nilpotent Lie algebra.
Suppose that the DGA $\bigwedge \frak n^{\ast}_{\C}$ admits a bigrading $\bigwedge \frak n^{\ast}_{\C}=\bigoplus \mathcal M_{p,q}^{\ast}$ such that 
$\mathcal M^{\ast}_{0,0}=\bigwedge^{0} \frak n^{\ast}_{\C}=\C$ and 
the product and the differential are of type $(0,0)$.
We suppose that $H^{1}(\frak n,\C)={\rm Ker}\, d_{\vert \frak n^{\ast}_{\C}}={\mathcal M}^{\ast}_{1,0}\oplus {\mathcal M}^{\ast}_{0,1}$
and $H^{2n+1}(\frak n,\C)=\bigwedge^{2n+1} \frak n^{\ast}_{\C}={\mathcal M}_{n+1,n+1}^{\ast}$.
Then we have $\dim H^{1}(\frak n,\R)=2n$.
\end{proposition}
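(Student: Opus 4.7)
The plan is to reduce the statement to a short counting argument on the bidegrees of a homogeneous basis of $\frak n^{\ast}_{\C}$. Since the bigrading on $\bigwedge\frak n^{\ast}_{\C}$ is multiplicative (product of type $(0,0)$) and the normalization $\mathcal M^{\ast}_{0,0}=\C$ forces $\mathcal M^{1}_{0,0}=0$, we obtain the direct-sum decomposition $\frak n^{\ast}_{\C}=\bigoplus_{(p,q)\neq(0,0)}\mathcal M^{1}_{p,q}$. First I would pick a basis $x_{1},\dots,x_{2n+1}$ of $\frak n^{\ast}_{\C}$ with each $x_{i}$ homogeneous of some bidegree $(p_{i},q_{i})\neq(0,0)$.

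The numerical input comes from the top-form hypothesis. The space $\bigwedge^{2n+1}\frak n^{\ast}_{\C}$ is one-dimensional, spanned by $x_{1}\wedge\cdots\wedge x_{2n+1}$, and by assumption equals $\mathcal M^{2n+1}_{n+1,n+1}$; multiplicativity of the bigrading then yields $\sum_{i}p_{i}=n+1=\sum_{i}q_{i}$, hence $\sum_{i}(p_{i}+q_{i})=2n+2$. The hypothesis on $H^{1}$ translates to: a homogeneous basis element $x_{i}$ is $d$-closed if and only if $(p_{i},q_{i})\in\{(1,0),(0,1)\}$, while every other basis element has $p_{i}+q_{i}\geq 2$ because $\mathcal M^{1}_{0,0}=0$.

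To finish, I would set $a=\dim_{\C}\mathcal M^{1}_{1,0}$, $b=\dim_{\C}\mathcal M^{1}_{0,1}$, and $c=2n+1-(a+b)$ for the number of remaining basis vectors. The total-bidegree identity becomes $(a+b)\cdot 1+\sum_{p_{i}+q_{i}\geq 2}(p_{i}+q_{i})=2n+2$, so the contribution of the $c$ remaining terms is exactly $c+1$. Since each of those terms is at least $2$, this gives $2c\leq c+1$, hence $c\leq 1$; and $c=0$ is incompatible with the remaining contribution being $1$, so $c=1$ and therefore $a+b=2n$. This yields $\dim_{\C}H^{1}(\frak n,\C)=2n$, so $\dim_{\R}H^{1}(\frak n,\R)=2n$.

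There is no real obstacle here beyond this bookkeeping. The only points requiring care are that the bigrading of the full exterior algebra really restricts to a direct-sum decomposition of $\frak n^{\ast}_{\C}$ (so a homogeneous basis exists) and that the one-dimensional top exterior power sits entirely in bidegree $(n+1,n+1)$; both are immediate from multiplicativity of the bigrading and the normalization $\mathcal M^{\ast}_{0,0}=\C$. The nilpotency of $\frak n$ never enters explicitly—it is already encoded in the existence of the bigraded minimal model together with the stated $H^{1}$ and $H^{2n+1}$ hypotheses.
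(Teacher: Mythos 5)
Your argument is correct and is essentially the paper's own proof: the paper sets $w_{r}=\sum_{p+q=r}\dim\mathcal M^{1}_{p,q}$ and uses $\sum_{r}w_{r}=2n+1$, $\sum_{r}rw_{r}=2n+2$ to get $\sum_{r}(r-1)w_{r}=1$, which is exactly your "remaining contribution equals $c+1$ with each term at least $2$" bookkeeping in different notation. No substantive difference.
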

\begin{proof}
Let $w_{r}=\sum_{p+q=r} \dim {\mathcal M}^{1}_{p,q}$.
Then, by the assumption, we have $\sum_{r} w_{r}=2n+1$ and $\sum_{r} rw_{r}=2n+2$.
Hence $\sum_{r} (r-1)w_{r}=1$ and this implies $w_{2}=1$, $w_{r}=0$ for $r\ge 3$ and so $w_{1}=2n$.
By the assumption, we have $w_{1}= \dim H^{1}(\frak n,\C)$.
Hence the proposition follows.
\end{proof}

\begin{theorem}[\cite{Nil}]
Let $N$ be a real  $(2n+1)$-dimensional simply connected nilpotent Lie group with a lattice $\Gamma$.
If the nilmanifold $\Gamma\backslash N$ admits a Sasakian structure, then $N$ is the real $(2n+1)$-dimensional  Heisenberg group.
\end{theorem}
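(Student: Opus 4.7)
The plan is to combine Nomizu's theorem with Theorem \ref{SasMMM11} to fix the bigraded shape of $\bigwedge \mathfrak{n}^*_\C$, extract the constraint $\dim H^1(\mathfrak{n})=2n$ via Proposition \ref{nilll}, and then use hard Lefschetz on the Sasakian basic cohomology to force $\mathfrak{n}$ to be Heisenberg.

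First I would apply Nomizu's theorem (Example \ref{nilmf}) to identify $\bigwedge\mathfrak{n}^*_\C$ with the minimal model of $A^*_\C(\Gamma\backslash N)$. Theorem \ref{SasMMM11}, applied to the compact Sasakian manifold $M=\Gamma\backslash N$, then equips $\bigwedge\mathfrak{n}^*_\C$ with a bigrading of the required type and puts an $\R$-mixed Hodge structure on $H^*(\mathfrak{n},\C)=H^*(M,\C)$. Proposition \ref{Hods} gives $H^1(\mathfrak{n},\C)=V_{1,0}\oplus V_{0,1}$ and $H^{2n+1}(\mathfrak{n},\C)=V_{n+1,n+1}$; combined with the tautological identifications $H^1(\mathfrak{n},\C)=\ker d|_{\mathfrak{n}^*_\C}$ and $H^{2n+1}(\mathfrak{n},\C)=\bigwedge^{2n+1}\mathfrak{n}^*_\C$, this satisfies the hypotheses of Proposition \ref{nilll} and yields $\dim H^1(\mathfrak{n},\R)=2n$. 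Hence $[\mathfrak{n},\mathfrak{n}]$ is $1$-dimensional, and by the classification of $(2n+1)$-dimensional nilpotent Lie algebras with $1$-dimensional derived subalgebra, $\mathfrak{n}\cong\mathfrak{h}_{2k+1}\oplus\R^{2n-2k}$ for some $1\le k\le n$.

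To force $k=n$, let $y$ be a generator of the one-dimensional weight-$2$ piece of $\mathfrak{n}^*_\C$ produced in the proof of Proposition \ref{nilll}; since the real structure on $\mathfrak{n}^*$ together with $dy\ne 0$ would otherwise produce a second weight-$2$ generator of conjugate type, $y$ must have type $(1,1)$. The $2$-form $dy$ on $\mathfrak{n}$ then has rank $2k$, so it suffices to show $(dy)^n\ne 0$ in $\bigwedge^{2n}\mathfrak{n}^*_\C$. For this I would use Tievsky's quasi-isomorphism (Theorem \ref{for}) between $A^*_\C(M)$ and $A^*=H^*_B(M,\C)\otimes\bigwedge\langle y_A\rangle$ with $dy_A=[d\eta]_B$, together with Morgan's inductive construction (after Theorem \ref{MMM11}) of the bigraded $1$-minimal model of $A^*$. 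Tracing the construction identifies the image of $y$ in $A^*$ with a non-zero scalar multiple of $y_A$ modulo closed forms, so $dy$ corresponds to a non-zero multiple of the basic K\"ahler class $[d\eta]_B$. The hard Lefschetz theorem for basic cohomology of compact Sasakian manifolds then gives $[d\eta]_B^n\ne 0$ in $H^{2n}_B(M,\C)$, which transports to $(dy)^n\ne 0$ in $\bigwedge^{2n}\mathfrak{n}^*_\C$. This forces the skew form $dy$ on $\mathfrak{n}$ to have rank $2n$, so $k=n$ and $\mathfrak{n}=\mathfrak{h}_{2n+1}$.

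The main obstacle is carefully tracking Morgan's inductive construction to confirm that the bigraded weight-$2$ generator of $\bigwedge\mathfrak{n}^*_\C$ is genuinely carried onto a scalar multiple of Tievsky's $y_A$ modulo closed forms under the composite quasi-isomorphism. Once this identification is in place, the hard Lefschetz input transfers faithfully, the rank-$2n$ conclusion for $dy$ is immediate, and $\mathfrak{n}\cong\mathfrak{h}_{2n+1}$ follows.
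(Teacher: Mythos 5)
Your overall strategy coincides with the paper's: Nomizu plus Theorem \ref{SasMMM11} and Proposition \ref{Hods} to verify the hypotheses of Proposition \ref{nilll}, hence $\dim H^{1}(\frak n,\R)=2n$ and $\bigwedge^{1}\frak n^{\ast}=V_{1}\oplus\langle v_{2}\rangle$ with $dv_{2}\in\bigwedge^{2}V_{1}$, and then non-degeneracy of $dv_{2}$ via $[d\eta]_{B}^{n}\neq 0$ to conclude $\frak n\cong\frak h_{2n+1}$. (Your intermediate reduction to $\frak h_{2k+1}\oplus\R^{2n-2k}$ is a correct but optional repackaging of the same non-degeneracy question.) The one place you diverge is exactly the step you flag as the ``main obstacle'': identifying the image of the weight-two generator with a nonzero multiple of Tievsky's $y_{A}$. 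Tracing Morgan's inductive construction of the bigraded minimal model through the zig-zag of quasi-isomorphisms is delicate and, as you note, not actually carried out in your proposal; this is a genuine gap as written. The paper avoids it entirely with a soft degree argument: since $\bigwedge\frak n^{\ast}$ is the minimal model and $A^{\ast}=H^{\ast}_{B}(M,\R)\otimes\bigwedge\langle y\rangle$ is quasi-isomorphic to the de Rham complex (Theorem \ref{for}), there is a quasi-isomorphism $\phi:\bigwedge\frak n^{\ast}\to A^{\ast}$; because $H^{1}(A^{\ast})=H^{1}_{B}(M,\R)$ one gets $\phi(V_{1})\subset H^{1}_{B}(M,\R)$, and because $\phi$ is an isomorphism on $H^{2n+1}$ with $H^{2n+1}(A^{\ast})=H^{2n}_{B}(M,\R)\wedge y$, the image of the top form $\bigwedge^{2n}V_{1}\wedge v_{2}$ must involve $y$, forcing $\phi(v_{2})=a+cy$ with $c\neq 0$ and hence $\phi(dv_{2})=c\,[d\eta]_{B}$. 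No bookkeeping of Morgan's bigrading through the comparison maps is needed (the bigrading is only used to reach Proposition \ref{nilll}). If you replace your ``trace the inductive construction'' step by this argument, your proof closes and is essentially the paper's.
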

\begin{proof}
Let $\frak n$ be the Lie algebra of $N$.
Then we can say that $\bigwedge \frak n^{\ast}$ is the minimal model of $A^{\ast}(\Gamma\backslash N)$ (see Example \ref{nilmf}).
By Theorem \ref{SasMMM11} and Proposition \ref{Hods},
the assumptions of Proposition \ref{nilll} hold and hence
 we can say  that $\bigwedge^{1} \frak n^{\ast}=V_{1}\oplus \langle v_{2}\rangle $ such that $H^{1}(\n,\R)=V_{1}$, $\dim V_{1}=2n$ and $dv_{2}\in \bigwedge^{2} V_{1}$.
By theorem \ref{for}, we have a morphism $\phi:\bigwedge \frak n^{\ast}\to A^{\ast}= H^{\ast}_{B}(M,\R)\otimes\bigwedge \langle y\rangle  $ which induces a cohomology isomorphism.
By $H^{1}(A^{\ast})=H^{1}_{B}(M,\R)$, we have $\phi(V_{1})=H^{1}_{B}(M,\R)$.
By $H^{2n+1}(A^{\ast})=H^{2n}_{B}(M,\R)\wedge y$, we have $\phi(\bigwedge^{2n+1} \frak n^{\ast})=H^{2n}_{B}(M,\R)\wedge y$.
Hence we can obtain $\phi(v_{2})=a+cy$ such that  $a\in H^{1}_{B}(M,\R)$ and $c\in \R$ with $c\not=0$.
This implies that $\phi(dv_{2}) \in \langle [d\eta]\rangle$.
It is known that $[d\eta]^{2n}\not=0$ (see \cite[Section 7.2]{BoG}).
Hence $dv_{2}$ is non-degenerate in $\bigwedge^{2} V_{1}$ and so the Theorem follows.

\end{proof}


\end{document}